\documentclass[10pt,notitlepage,twoside,a4paper]{amsart}

\usepackage{amsmath,amssymb,enumerate}

\usepackage{epsfig,fancyhdr,color}%,showkeys,amsmidx

\usepackage{amssymb}
\usepackage{amsmath,amsthm}
\usepackage{latexsym}
\usepackage{amscd}
\usepackage{psfrag}
\usepackage{graphicx}
\usepackage[latin1]{inputenc}
\usepackage[all]{xy}
\usepackage[mathcal]{eucal}
\usepackage{cancel}

\definecolor{NoteColor}{rgb}{1,0,0}

% ----------- MACROS

\renewcommand{\textsc}{\textcolor{red}}

%

% ---------------------------------------

\newtheorem{theorem}{\rm\bf Theorem}[section]
\newtheorem{proposition}[theorem]{\rm\bf Proposition}

\newtheorem*{theorem 1}{\rm\bf Proposition 1}
\newtheorem*{theorem 2}{\rm\bf Proposition 2}

\theoremstyle{definition}
\newtheorem{definition}[theorem]{\rm\bf Definition}

\theoremstyle{remark}

\def\interieur#1{\mathord{\mathop{\kern 0pt #1}\limits^\circ}}

% ---------------------------------------

\title[Timelike Hilbert geometry of the spherical simplex]{Timelike Hilbert geometry of the spherical simplex}

\author{Athanase Papadopoulos}

\address{Athanase Papadopoulos,  Universit{\'e} de Strasbourg and CNRS,
7 rue Ren\'e Descartes,
 67084 Strasbourg Cedex, France}
\email{athanase.papadopoulos@math.unistra.fr}

\author{Sumio Yamada}

 \address{Sumio Yamada,
  Gakushuin University, 1-5-1 Mejiro, Toshima, Tokyo, 171-8588, Japan}
 \email{yamada@math.gakushuin.ac.jp}

\date{\today}

% ---------------------------------------

\begin{document}

  \maketitle
  
  \centerline{\emph{To Norbert A'Campo with friendship and admiration}} 
  \begin{abstract} We first review the notion of  timelike metric spaces. This is a  metric theory developed by Herbert Busemann, as a geometrical setting for the theory of general relativity. We review in particular the notions of timelike Hilbert and timelike spherical Hilbert geometry. We prove the following result on the timelike spherical Hilbert geometry of simplices: 
 Let $\Delta_2$ be a simplex on the 2-sphere and $\tilde{\Delta}_2$ the antipodal simplex. We show that the timelike spherical Hilbert geometry associated with the pair  $\Delta_2, \tilde{\Delta}_2$ is isometric to a union of six copies of vector spaces equipped with a timelike norm, isometrically and transitively acted upon by the group $\mathbb{R}_{>0}^2 \times \mathbb{Z}_3\times \mathbb{Z}_2$. This is a timelike spherical analogue of a well-known result (due to Busemann) stating that the Hilbert metric of a Euclidean simplex is isometric to a metric induced by a normed vector space.  At the same time, this gives a new example of timelike space.
  \medskip

\noindent  \emph{Keywords.---} Timelike geometry, timelike Minkowski space, Lorentzian geometry, timelike spherical Hilbert geometry.
  
  \medskip
  
 \noindent    \emph{AMS classification: 53C70, 53C22, 5C10, 53C23, 53C50, 53C45}  
  \end{abstract}
     
  \medskip

  \tableofcontents

  \section{Introduction}\label{s:Intro}
  
  In his paper \cite{B-Timelike}, Herbert Busemann introduced the notion of timelike space. This is a Hausdorff topological space $\Omega$ equipped with a partial order relation $<$  and a function $d$ called \emph{(timelike) distance function}\index{timelike distance function}---even though its does not satisfy the usual axioms of a distance---, such that for every $x$ and $y$ in $\Omega$, $d(x,y)$ is defined if and only if $x\leq y$ (that is, $x<y$ or $x=y$) and such that for any triple $x, y, z$ in $\Omega$ satisfying $x\leq y\leq z$,  the reverse triangle inequality, also called  the \emph{time inequality}),\index{time inequality} holds, that is, we have 
  \[d(x,z)\geq d(x,y)+d(y,z).\]
  
A classical example where this time inequality holds for triples of points is the setting of the Lorentzian space of special relativity (we shall recall the definition below). 
    
  In Busemann's general setting, there are several axioms that the partial order relation $<$ and the distance function $d$ satisfy---too numerous to be mentioned all here; we shall say a few words on these axioms in the next section. Note that unlike the usual distance functions, the timelike distance function $d$ does not define a topology on the underlying space $\Omega$; this is why such a topology is given in advance.
  
Busemann's theory of timelike spaces is a general metric setting including as a special case semi-Riemannian (also called pseudo-Riemannian) manifolds, which, in turn, generalize Riemannian manifolds in the sense that the quadratic form defining the metric is infinitesimally nondegenerate but not necessarily positive definite. An example of a timelike space is the $n$-dimensional Lorentzian space. This is the timelike space in which the underlying topological space $\Omega$ is the real affine $n$-dimensional space $A^n$ equipped with the indefinite metric given in the affine coordinates $\mathbf{x}=(x_1,\ldots, x_n)$ by
\[
\lambda_n(\mathbf{x})=x_1^2-\sum_{i=2}^nx_i^2.\]
The associated order relation $<$ is defined for  $\mathbf{x}=(x_1,\ldots, x_n)$ and  $\mathbf{y}=(y_1,\ldots, y_n)$ as
\[ \mathbf{x} < \mathbf{y} \iff
 (x_1<y_1 \ \hbox { and } \ \lambda_n(\mathbf{x}-\mathbf{y})>0).
\]
The timelike distance function is then given by
\[d(\mathbf{x},\mathbf{y})=\sqrt{\lambda_n(\mathbf{x}-\mathbf{y})}.\]

More general examples of timelike spaces are the timelike Minkowski spaces. These are timelike spaces in which the underlying space is a finite-dimensional vector space equipped with a distance induced by a timelike norm, that is, a function analogous to a usual norm that defines a Minkowski space (finite-dimensional normed space) except that this function (the ``Minkowski functional"), instead of being defined on the entire vector space, is only defined on a proper cone (that is, a cone that does not contain any line) with apex the origin, and instead of being convex, is concave. In other terms, the unit ball of  a timelike Minkowski space  is contained in a proper cone which, seen from the origin, is a concave hypersurface.
The timelike Minkowski spaces are the local models of the continuously differentiable timelike Finsler manifolds\index{timelike!Finsler manifold} in much the same way as the classical Minkowski spaces are the local models for  a continuously  differentiable Finsler manifolds (see \cite{Busemann-Minkowski}), and in the same way as the Euclidean vector spaces are the local models for Riemannian manifolds.

  The motivation for studying timelike metrics is that they form a geometric setting for  the theory of general relativity, in much the same way as Lorentzian space forms a setting for special relativity. The order relation inherent in the definition of a timelike metric is a mathematical abstraction of the causality property of space-time of relativity theory; that is, $x<y$ reflects the fact that $y$ is in the future of $x$, or that there is a causality relation between $x$ and $y$ (or that $y$ is influenced by $x$).

Since we talked about general relativity, let us also mention Einstein's important article on the subject, \cite{Einstein}.
To close the reference to physics, let us mention that a general metric setting for general relativity, called \emph{Chronogeometry},\index{chronogeometry} was developed, independently from that of Busemann, by A. D. Alexandrov and his school in Russia, see e.g. the review in \cite{BP-Chrono}.

Let us mention some examples. Busemann, in \cite{B-Timelike}, showed that products of timelike spaces with metric spaces satisfying certain hypotheses are timelike spaces,  for some appropriate definition of the product metric \cite[\S 4]{B-Timelike}. Busemann's work on this subject is also reviewed in \cite{PY}. Other interesting examples of timelike spaces are the exterior (also called timelike) Funk and Hilbert geometries of convex subssets of $\mathbb{R}^n$. Based on Busemann's ideas, we have developed the theory of exterior Funk and Hilbert geometries in the paper \cite{PY-Timelike-Hilbert} in which we also introduced non-Euclidean variants of these geometries. In the latter setting,  the underlying convex sets are contained in the $n$-dimensional sphere or hyperbolic space rather than in the Euclidean space. In the same paper, we gave a characterization of the classical de Sitter geometry as a special case of a timelike spherical Hilbert geometry, namely, it is the exterior Hilbert geometry of a union of two disjoint geometric antipodal discs in the sphere.

  Let us point out that a general theory of pseudo-Finslerian manifolds,\index{pseudo-Finslerian manifold} in the setting of timelike spaces, has been developed in the recent PhD  thesis of Guilllaume Buro, see \cite{Buro-these}. In particular, the author proves there classical and important theorems of Finsler geometry (Busemann--Mayer, Hopf--Rinow, etc.)  in this pseudo-Finslerian setting.

  In the present paper, we study the case of a timelike spherical Hilbert geometry which is the exterior geometry of two antipodal simplices on the sphere. We prove the following: 
   
   \medskip
   
 \noindent{\textbf{Theorem 6.1} \textit{Let $\Delta_2$ be a spherical $2$-dimensional simplex in the sphere $S^2$, and $\tilde{\Delta}_2$ its antipodal simplex. Then the timelike Hilbert geometry of $\Omega= S^2\setminus ( \Delta_2 \cup \tilde{\Delta}_2)$ is isometric to a union of  $six$ copies of timelike normed spaces  on which the abelian group $\mathbb{R}_{> 0}^2  \times \mathbb{Z}_{3} \times  \mathbb{Z}_{2} $ acts isometrically and transitively. 
}
 
\medskip

The higher-dimensional cases can be treated similarly.  The exposition, however, needs extra technicalities, and it will be given in a separate article.    
  
  Theorem \ref{th:1} may be compared with a result of Busemann saying that, in the classical (non-timelike) case, the Hilbert geometry of a simplex in any dimension is induced by a norm on a finite-dimensional vector space. (In fact, Busemann's result  is   more precise; see   \cite[p. 35]{B-Timelike} and \cite[p. 313]{BP}). In dimension 2, our result is also a timelike analogue of a result of Phadke  on the Hilbert geometry of a triangle \cite{Phadke}.

 \section{The timelike Hilbert geometry of a spherical simplex} \label{s:timelike-Hilbert}
We first briefly review the notion of timelike space and of timelike Hilbert geometry. 

A timelike space is a Hausdorff topological space $\Omega$ equipped with a partial order relation $<$ and a function $d$ which plays the role of a distance function. 
 The distance $d(x,y)$  is defined only for pairs $(x,y)\in \Omega\times \Omega$ satisfying $x\leq y$ (that is, either $x=y$ ot $x<y$) and it satisfies the following three axioms:
 \begin{enumerate}
 \item $d (x,x)=0$ for every $x$ in $\Omega$;
 \item $d(x,y) \geq 0$ for every $x$ and $y$ in $\Omega$ such that $x<y$;
 \item  $d(x,y)+d(y,z)\leq d(x,z)$ for all triples of points $x,y,z$ in $\Omega$ satisfying $x<y<z$.
 \end{enumerate}
 The last property is the \emph{time inequality}. It is a reverse triangle inequality.  
 We note that the distance function $d$ is asymmetric, that is, $d(x,y)$ is not necessarily equal to $d(y,x)$. (In fact, if $d(y,x)$ is defined, $d(x,y)$ is not defined unless $x=y$.)

The notion of timelike space, as introduced by Busemann in   \cite{B-Timelike}, involves a list of additional axioms which make relations between the topological properties of the space $\Omega$, the distance function $d$ and the partial order relation $<$. 
For instance, one axiom requires that any neighborhood of a point $q$ in $\Omega$ contains points $x$ and $y$ satisfying $x<q<y$.  Other axioms insure the existence of local geodesic segments and the uniqueness of their extension. These properties are timelike analogues of the properties of a G-space in the sense of Busemann (G stands for ``Geodesic"), a theory developed in \cite{G}. 
 We do not state these axioms in the present paper because there are too many of them. They are all satisfied in the special case of the timelike spherical Hilbert  geometry which we consider here.  

We shall recall  the notion of timelike spherical Hilbert metric  studied in \cite{PY-Timelike-Hilbert}, since our main result concerns this metric.  Before that, we need to recall another timelike metric, namely, the timelike Funk metric. First of all, we remind the reader of some elementary facts on convex subsets of the sphere. The basic notions in spherical convexity theory that we use are presented in detail in the paper \cite{PY-Timelike-Hilbert}.

For $n\geq 1$, let $S^n$ be the $n$-dimensional sphere, which we regard as the unit sphere in $\mathbb{R}^{n+1}$. We denote by $O$ its center.

A \emph{spherical segment} of $S^n$ is a segment of a great circle of this sphere. Such a segment is the shortest path between its endpoints if and only if it is contained in a hemisphere, that is, the complement in  $S^n$ of the intersection of this sphere with a hyperplane passing through the origin.

A subset $I\subset S^n$ is said to be \emph{convex} if $I\not=S^n$ and if any two points in $I$ are joined by a shortest line contained in $I$. Note that this implies that $I$ is contained in an open hemisphere. Notions such as \emph{hypersphere},  \emph{great hypersphere}, \emph{open} and \emph{closed hemisphere}, \emph{pole} of a hemisphere, etc. are defined in a natural way. The following notion plays a crucial role in this setting:

 A \emph{supporting hyperplane} $\pi$  to an open convex subset $I$ of $S^n$ is a great hypersphere whose intersection with the closure $\overline{I}$ of $I$ is nonempty and such that $I$ is contained in one of the two connected components of the complement of $\pi$ in $S^n$. 
For each point on the boundary of a convex subset of the sphere, there is a supporting hypersphere containing it. 

Let  $I_1$ and $I_2$ be two open convex subsets of $S^n$ and let $K_1$ and $K_2$ be respectively the hypersurfaces that bound them. In other words, for $i=1,2$, $K_i= \overline{I}\setminus I$.  
%
%We shall also say that a supporting hyperplane to $I_i$ is a supporting hyperplane to $K_i$ or to $\overline{K_i}$, depending on the subset of the sphere that we want to stress on.

 \begin{definition}\label{rel-pos}
 The two hypersurfaces $K_1,K_2$ are said to be \emph{in good position} if the following two properties are satisfied: 
 \begin{enumerate}
 \item $\overline{I_1}\cap \overline{I_2}=\emptyset$;
 \item For any great circle $C$ satisfying $C\cap K_i\not=\emptyset$ for $i=1,2$, the set $C\setminus (\overline{I_1}\cup \overline{I_2})$ is the union of two spherical segments of length $<\pi$. 
 \end{enumerate}
 \end{definition}
 
 The following is proved in \cite{PY-Timelike-Hilbert} (Proposition  11.3).
  
 \begin{proposition} \label{prop:anti} Assume $K_1, K_2$ are in good position. Then, the union  $I_1\cup I_2$ contains a pair of antipodal points, each contained in one of the sets $I_1, I_2$.
 \end{proposition}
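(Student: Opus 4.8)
\emph{Proof proposal.} The plan is to reduce the statement to a one-dimensional question along a well-chosen great circle, and then to observe that the ``gaps of length $<\pi$'' clause in the definition of good position is exactly the hypothesis that forces an antipodal overlap. Writing $a$ for the antipodal involution $x\mapsto -x$ of $S^n$, a pair of antipodal points with one point in each set is the same as a point $p\in I_1$ whose antipode $-p$ lies in $I_2$; so the goal is to show $I_1\cap a(I_2)\neq\emptyset$.

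First I would produce a great circle meeting both sets. Choose $p_1\in I_1$ and $p_2\in I_2$; these are distinct since $\overline{I_1}\cap\overline{I_2}=\emptyset$ by condition (1). If $p_2=-p_1$ the conclusion already holds, so I may assume $p_1,p_2$ are neither equal nor antipodal, and let $C$ be the unique great circle through them. Because $I_i$ is contained in an open hemisphere whereas $C$ is not (a great circle never lies in an open hemisphere, the restriction of any linear functional to it being a sinusoid), the set $C\cap I_i$ is a nonempty proper open sub-arc of $C$ containing $p_i$ in its interior, so it is nondegenerate and its two endpoints lie on $K_i$. In particular $C\cap K_i\neq\emptyset$ for $i=1,2$, and condition (2) of good position applies to $C$.

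Next I would analyze the configuration along $C$. Since $I_i$ is convex, $\overline{A_i}:=\overline{I_i}\cap C$ is a closed arc, of length $\le\pi$ because $I_i$ lies in an open hemisphere, and the two arcs are disjoint by condition (1). They cut $C$ into $\overline{A_1}$, $\overline{A_2}$ and two complementary gaps, which condition (2) forces to have lengths $<\pi$. Parametrizing $C$ by arc length so that $\overline{A_1}=[0,\ell_1]$ and $\overline{A_2}=[\ell_1+g_1,\ell_1+g_1+\ell_2]$ with $\ell_1+g_1+\ell_2+g_2=2\pi$, the antipode of the open arc $A_1=I_1\cap C=(0,\ell_1)$ is $(\pi,\pi+\ell_1)$. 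Both this antipodal arc and $A_2=(\ell_1+g_1,\ell_1+g_1+\ell_2)$ sit inside $(0,2\pi)$ without wrapping, so their overlap on the circle coincides with their overlap as real intervals; comparing endpoints, they meet iff $g_1<\pi$ (from $\ell_1+g_1<\pi+\ell_1$) and $g_2<\pi$ (from $\pi<\ell_1+g_1+\ell_2=2\pi-g_2$), both of which hold. Any $p$ in the overlap then satisfies $p\in A_1\subset I_1$ and $-p\in A_2\subset I_2$, which is the desired antipodal pair.

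The computation is elementary; the conceptual point, and the step I would be most careful about, is recognizing that condition (2) is precisely what excludes the ``two arcs crowded together'' configuration, in which one gap would exceed $\pi$ and no antipodal overlap need occur. I would also verify the degenerate boundary cases where some $\overline{A_i}$ has length exactly $\pi$, checking that the non-wrapping comparison still goes through, and confirm that $C\cap I_i$ is genuinely a nondegenerate arc (it contains the interior point $p_i$) so that its endpoints really land on $K_i$ and condition (2) is legitimately invoked.
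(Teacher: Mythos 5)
Your argument is correct. Note first that the paper itself does not prove Proposition \ref{prop:anti}: it is quoted from Proposition 11.3 of \cite{PY-Timelike-Hilbert}, so there is no in-paper proof to compare yours against; your proof therefore has to stand on its own, and it does. The reduction to a single great circle through one interior point of each $I_i$, followed by the arc-length bookkeeping in which $g_1<\pi$ and $g_2<\pi$ force the antipodal image of the $I_1$-arc to meet the $I_2$-arc, is sound, and it correctly isolates condition (2) of good position as the hypothesis doing all the work. Two small points deserve more care than you give them. First, the assertions that $C\cap I_i$ is a single open arc and that $\overline{I_i}\cap C$ is a closed arc whose relative interior is exactly $C\cap I_i$ (equivalently, $\overline{I_i}\cap C=\overline{I_i\cap C}$) rest on the spherical accessibility lemma for convex sets: if $y\in I_i$ and $x\in\overline{I_i}$, the half-open minimizing arc $[y,x)$ lies in $I_i$. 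This is standard (e.g.\ via gnomonic projection to a hemisphere containing $I_i$), but since $\overline{I_i}$ may touch the equator of that hemisphere the identification is not purely formal, and your parametrization $A_1=(0,\ell_1)$ is precisely what the endpoint comparison consumes; if $I_1\cap C$ were a proper sub-arc of the interior of $\overline{I_1}\cap C$, the overlap computation would not immediately yield a point of $I_1$. Second, a trivial naming slip: a point $p$ of the overlap you construct lies in $A_2$ with $-p\in A_1$, not $p\in A_1$ with $-p\in A_2$ as written; since the conclusion is symmetric under the antipodal flip, this costs nothing.
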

  
%\begin{figure}[!ht] 
%\centering
%\includegraphics[width=0.55\linewidth]{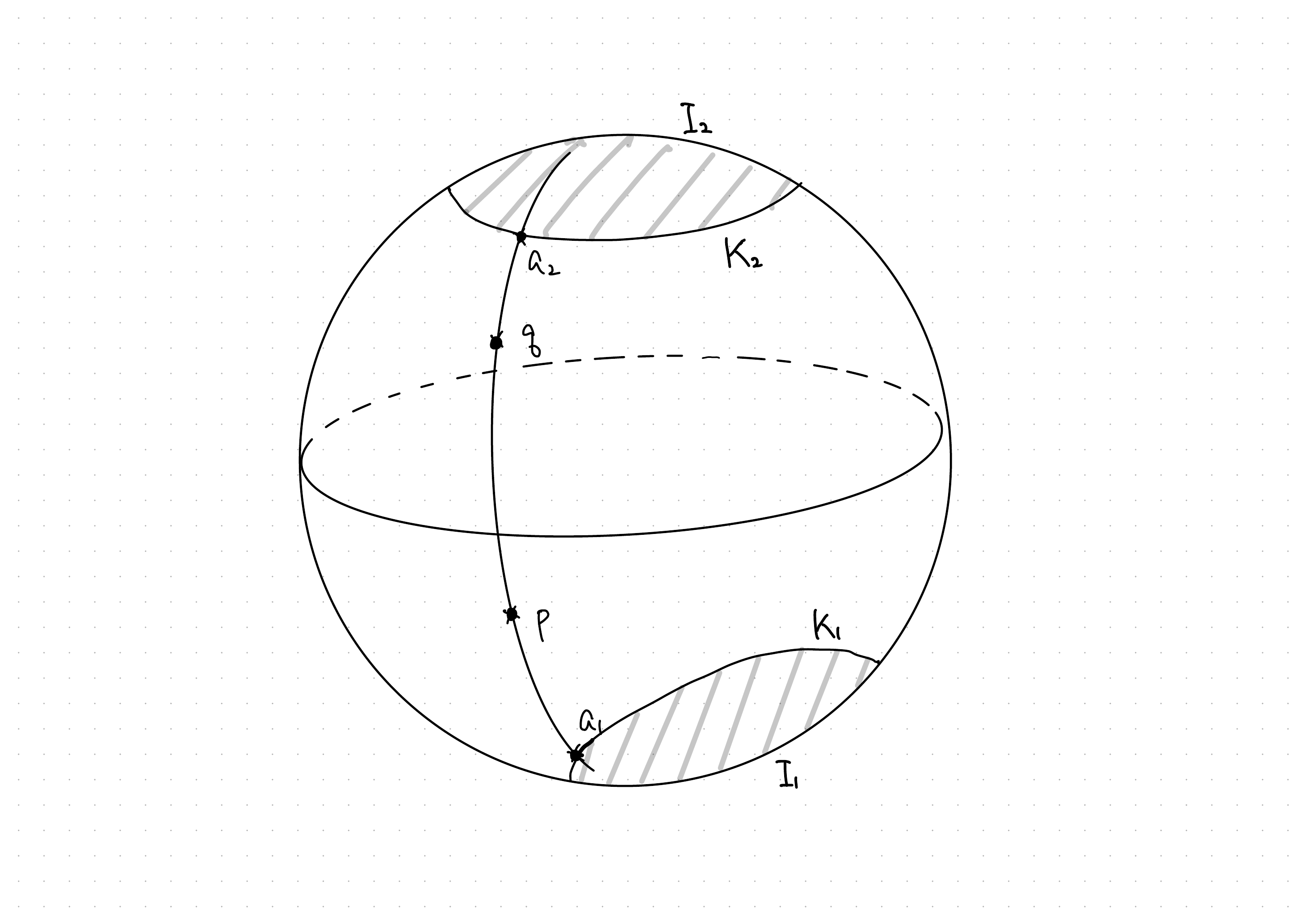}    \caption{\small {Timelike spherical Hilbert geometry}   \label{domain Omega}  
%\end{figure}

\begin{figure}
\label{fig1}
\centering
 \includegraphics[width=1\linewidth]{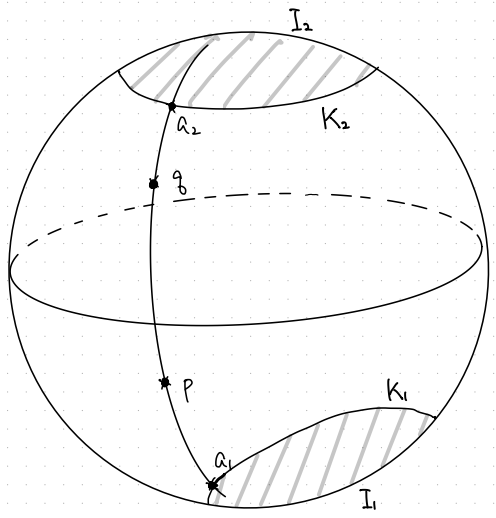} 
    \caption{\small {The timelike spherical Hilbert geometry associated with two hypersurfaces $K_1$ and $K_2$}}
\end{figure}

 In the following, we shall assume that our two hypersurfaces $K_1$ and $K_2$ are in good position. Let $\Omega= S^2\setminus (\overline{I_1}\cup \overline{I_2})$. We define a partial order relation on $\Omega$.
 
 \begin{definition}[Partial order] \label{def:Partial} Given two points $p$ and $q$ in $\Omega$,  we say that \emph{$q$ is in the future of $p$}, or that \emph{$p$ is in the past of $q$}, and we write $p<q$, if there exists a segment $[p,q]$ of a great circle $C$ in $\Omega$  joining $p$ and $q$ and if there exist two points $a_1\in C\cap K_1$ and $a_2\in C\cap K_2$ such that the four points $a_1,p,q,a_2$ lie in that order on $C$, with $]a_1,a_2[\subset \Omega$ and such that $]a_1,a_2[$ is not contained in any supporting hyperplane to $K_1$ or $K_2$.  (See Figure 1.)   

 \end{definition}

 We denote  the set of pairs $(p, q)$ in $\Omega \times \Omega$ satisfying $p < q$ by $\Omega_<$ and, as usual, we write $p \leq q$ when $p<q$ or $p=q$.  
 We have the following (Proposition 11.8 of \cite{PY-Timelike-Hilbert}).
 
  \begin{proposition}[Transitivity of the partial order relation]  Let $p$, $q$ and $r$ be three points in $\Omega$ satisfying $p\leq q$ and $q\leq r$. Then we have $p\leq r$.
 \end{proposition}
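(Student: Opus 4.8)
The plan is to reduce at once to the strict inequalities $p<q$ and $q<r$ (if either relation is an equality there is nothing to prove) and then to analyse the two great circles produced by Definition \ref{def:Partial}. Let $C$ be a great circle witnessing $p<q$, with $a_1\in C\cap K_1$ and $a_2\in C\cap K_2$ occurring in the order $a_1,p,q,a_2$, and let $C'$ witness $q<r$, with $b_1\in C'\cap K_1$ and $b_2\in C'\cap K_2$ in the order $b_1,q,r,b_2$; both circles pass through $q$. The first thing I would establish is a normal form for these arcs. Since $a_1\in K_1$ and $a_2\in K_2$ lie on the boundary $K_1\cup K_2$ of $\Omega$, the open arc $]a_1,a_2[\subset\Omega$ is in fact a whole connected component of $C\cap\Omega$. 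As $C$ meets both $K_1$ and $K_2$, the good-position condition (2) of Definition \ref{rel-pos} applies and tells us that $C\cap\Omega$ has exactly two such components, each of length $<\pi$. Hence $]a_1,a_2[$ is the unique component of $C\cap\Omega$ containing $q$, and along it the relation $<$ increases monotonically from the endpoint on $K_1$ to the endpoint on $K_2$. The same holds for $]b_1,b_2[$ on $C'$.

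The collinear case $C=C'$ is then immediate. Both $]a_1,a_2[$ and $]b_1,b_2[$ are the unique component of $C\cap\Omega$ through $q$, so they coincide; in particular $a_1=b_1$ and $a_2=b_2$. Reading off the two orders $a_1,p,q,a_2$ and $a_1,q,r,a_2$, the point $p$ lies between $a_1$ and $q$ and $r$ lies between $q$ and $a_2$, so the five points occur in the order $a_1,p,q,r,a_2$ on $C$. Deleting $q$ leaves $a_1,p,r,a_2$ with $]a_1,a_2[\subset\Omega$ and not contained in any supporting hyperplane (this is inherited from the validity of $p<q$, as the arc is the same), whence $p<r$ by Definition \ref{def:Partial}.

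The substantive case, which I expect to be the main obstacle, is $C\neq C'$: here the two circles meet only at $q$ and its antipode, the witnessing geodesics genuinely bend at $q$, and one must produce a third great circle through $p$ and $r$ realising $p<r$. My plan is to pass to the cone model in $\mathbb{R}^3$, where $I_1,I_2$ lift to open convex cones $\hat I_1,\hat I_2$ containing no line (as each $I_i$ lies in an open hemisphere) and great circles become $2$-planes through $O$. Using the component lemma above I would first re-encode the order intrinsically: $p<q$ holds precisely when $q$ lies in the visual cone $V_2(p)$ of $I_2$ seen from $p$ and $p$ lies in the visual cone $V_1(q)$ of $I_1$ seen from $q$, the connecting segment lying in $\Omega$. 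Convexity of $I_1$ and $I_2$ makes each visual cone convex, and the strict separation $\overline{I_1}\cap\overline{I_2}=\emptyset$ of condition (1) of Definition \ref{rel-pos}, realised by a separating great circle, keeps the two families of cones on opposite sides. Transitivity then reduces to a nesting statement for these cones: from $q\in V_2(p)$ and $r\in V_2(q)$, together with the matching $K_1$-conditions $q\in V_1(r)$ and $p\in V_1(q)$, one must deduce $r\in V_2(p)$ and $p\in V_1(r)$.

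The crux is exactly this nesting statement, and it is where convexity enters in full force. I would prove it by a supporting-hyperplane argument: fix a supporting hyperplane to $I_2$ at the point where the arc through $p,q$ meets $K_2$, and use convexity of $I_2$ together with the fact that $q$ and $r$ each lie strictly before $I_2$ on their respective geodesics to show that the geodesic $[p,r]$, extended, still meets $K_2$ beyond $r$; the symmetric argument with $I_1$ controls the backward extension, and good-position condition (2) keeps the connecting arc inside $\Omega$. Finally, the non-tangency clause of Definition \ref{def:Partial} is invoked to exclude the degenerate configurations in which the extended arc through $p$ and $r$ is tangent to $K_1$ or $K_2$: such a tangency would force tangency of one of the two given arcs, contradicting the validity of $p<q$ or of $q<r$. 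Assembling the four paragraphs yields $p\leq r$ in all cases.
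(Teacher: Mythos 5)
Your reduction to strict inequalities, the observation that $]a_1,a_2[$ is a whole connected component of $C\cap\Omega$ (which settles the collinear case $C=C'$ by matching endpoints on $K_1$ and on $K_2$, using $\overline{I_1}\cap\overline{I_2}=\emptyset$), and the decision to lift the non-collinear case to convex cones in $\mathbb{R}^3$ are all correct; for calibration, note that the paper itself contains no proof of this proposition (it is imported as Proposition 11.8 of \cite{PY-Timelike-Hilbert}), so your attempt must be measured against what a complete argument requires. The problem is that the two steps you yourself single out as the crux are exactly the ones left unproved, and for both of them the mechanism you name is not one that can work. For the nesting statement, a supporting hyperplane to $I_2$ at $a_2$ certifies that a geodesic stays on one side of $\overline{I_2}$, i.e.\ that it \emph{misses} the convex set; what must be certified is the opposite, that the great circle through $p$ and $r$ \emph{meets} $\overline{I_2}$ beyond $r$, and no separation-type argument yields an intersection. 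What does work is bare cone convexity: writing $\hat{x}$ for the lift of $x$ and $\hat{C}_i$ for the closed convex cone over $\overline{I_i}$, the orders $a_1,p,q,a_2$ and $b_1,q,r,b_2$ (on arcs of length $<\pi$, by good position) give $\hat{q}=\alpha\hat{p}+\beta\hat{a}_2$ and $\hat{r}=\gamma\hat{q}+\delta\hat{b}_2$ with positive coefficients, hence $\hat{r}=\gamma\alpha\,\hat{p}+\bigl(\gamma\beta\,\hat{a}_2+\delta\,\hat{b}_2\bigr)\in\mathbb{R}_{>0}\,\hat{p}+\hat{C}_2$, which says exactly that $r$ lies on an arc from $p$ ending in $\overline{I_2}$; the first intersection $a_2'$ of that arc with $\overline{I_2}$ must lie beyond $r$, since otherwise $r$ would be sandwiched between two points of the convex set $\overline{I_2}$. (The degenerate alternative, $\gamma\beta\hat{a}_2+\delta\hat{b}_2$ proportional to $-\hat{p}$, forces $-p\in\overline{I_2}$ and $r=\pm p$, either trivial or contradicting $r\in\Omega$.) The symmetric computation with $\hat{C}_1$ produces $a_1'$ behind $p$. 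Note also that what keeps $]a_1',a_2'[$ inside $\Omega$ is this same sandwiching by convexity, \emph{not} condition (2) of Definition \ref{rel-pos} as you assert: that condition says $L\cap\Omega$ has two components of length $<\pi$, but by itself it does not place $p$ and $r$ in the same component.

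The non-tangency step is a second genuine gap. You claim that if the great circle $L$ through $p$ and $r$ were a supporting hyperplane of $K_1$ or $K_2$, this ``would force tangency of one of the two given arcs''; no such implication is available, and in the non-collinear case the non-degeneracy of the given arcs is in fact irrelevant to this step. The argument that works runs through collinearity instead. Suppose $L$ supports $I_2$, with $\overline{I_2}$ contained in the closed hemisphere $H$ bounded by $L$. An arc of length $<\pi$ lying on a great circle distinct from $L$ meets $L$ in at most one point (two distinct great circles meet only in a pair of antipodes) and crosses it transversally there. Applied to the arc $[p,a_2]\ni q$, which meets $L$ at $p$ and ends at $a_2\in H$, this forces $q$ into the open hemisphere $\mathrm{int}\,H$; applied then to the arc $[q,b_2]\ni r$, which starts in $\mathrm{int}\,H$ and crosses $L$ at $r$, it forces the endpoint $b_2$ into the opposite open hemisphere, contradicting $b_2\in\overline{I_2}\subset H$. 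So tangency of $L$ is outright impossible once $p,q,r$ are not collinear: the contradiction is with non-collinearity, not with the non-tangency hypothesis on the given arcs. In summary, your architecture (reduction, collinear case, cone lift, nesting statement) is sound and the collinear half is complete, but the two central claims are left as sketches whose announced tools (supporting hyperplanes to produce intersections; inherited tangency to exclude degeneracy) would not close them; the convexity-of-cones computation and the antipodal-crossing argument above are what is actually needed.
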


  We now define the \emph{timelike spherical relative Funk distance}\index{timelike!relative Funk metric!spherical}  $F_1^2$ on the subset $\Omega_{\leq}$ of the product $\Omega\times \Omega$  consisting of pairs $(p,q)$ with $p \leq q$ by 
 \[F_1^2(p,q)=\log\frac{\sin d(p,b(p,q))}{\sin d(q,b(p,q))},\]
 where  $d$ is the usual spherical distance and where for the two given points $p$ and $q$, $b(p, q)$ denotes the point where the geodesic ray from $p$ through $q$ intersects $K_2$. (This point is equal to $a_2$ in Definition \ref{def:Partial} above.)
We then extend this definition to the pairs $(p,p)$ in the diagonal of $\Omega\times\Omega$ by setting $F_1^2(p,p)=0$ for any such pair. Note here that we require that the great circle $C$ on which $p$ and $q$ lie intersects both $K_1$ and $K_2$, even though in order to define $F_1^2$,  $C$ does not need to intersect $K_1$.
 
The function $F_1^2(p,q)$ satisfies the timelike inequality (Proposition 11.12 of \cite{PY-Timelike-Hilbert}). It defines a timelike Finsler metric.

Next, we define the \emph{timelike  spherical relative reverse Funk metric} $\overline{F_1^2}$ associated with the pair $(K_1,K_2)$. For this, we first consider the timelike spherical relative Funk metric $F_2^1$ associated with the ordered pair $(K_1,K_2)$, and we define the new function $\overline{F_1^2}$, whose domain of definition  is equal to the domain of definition of $F_1^2$, by
 \[\overline{F_1^2}(p,q)=F_2^1(q,p) .\] 
 
 Now we can define the timelike spherical Hilbert metric:
 
 \begin{definition}[Timelike spherical Hilbert metric]
 The timelike spherical Hilbert\index{timelike!Hilbert metric!spherical}  metric $H_1^2$ associated with the ordered pair $(K_1,K_2)$ is defined on the set of ordered pairs $(p,q)$ such that $p<q$ by the formula 
 \[ H(p,q)=\frac{1}{2}(F_1^2(p,q)+\overline{F_1^2}(p,q)).\]
 
 In this setting, the convex set $K_1$ represents the past and the convex set $K_2$ the future.  (See Figure 2a.)
 
%\begin{figure}[!ht] 
%\centering
%\includegraphics[width=0.55\linewidth]{Figure_2a.eps, Figure_2b.eps}    \caption{\small {Hilbert $=$ relative Funk symmetrized}   \label{hilbert metric}  
%\end{figure}

\begin{figure}
\label{fig2}
\centering
 \includegraphics[width=1\linewidth]{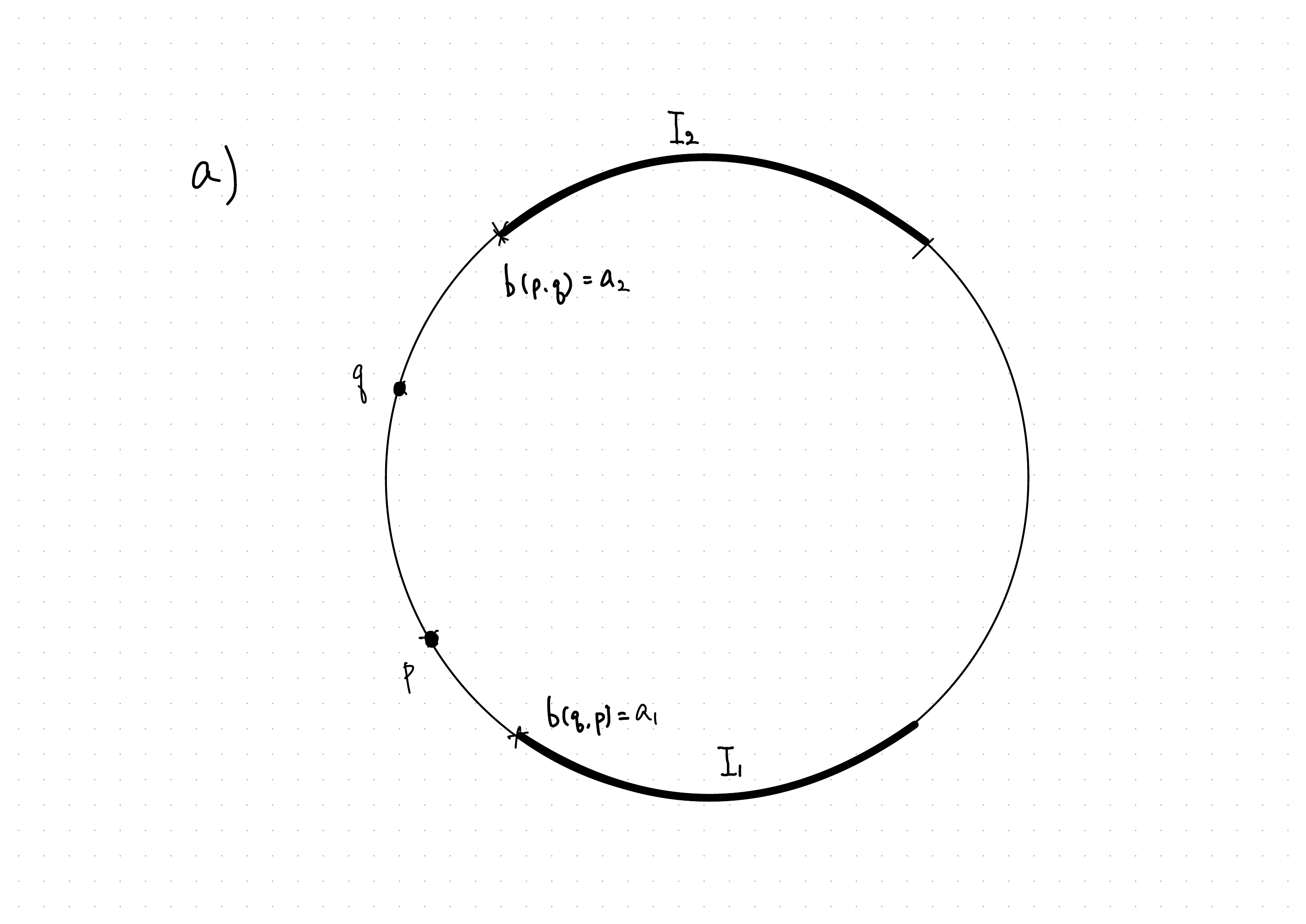}  
 \includegraphics[width=1\linewidth]{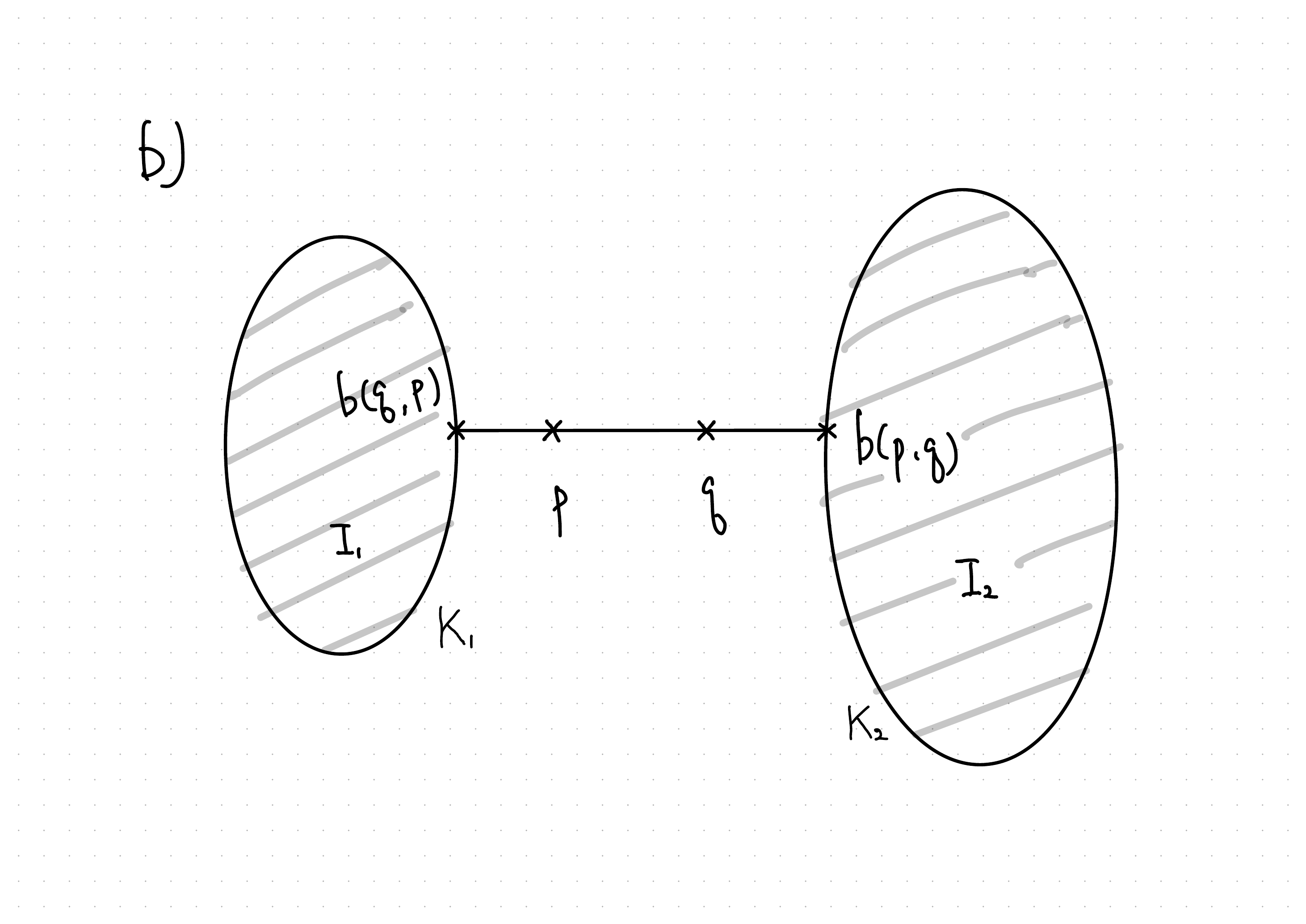}  
   \caption{\small {The Hilbert metric is the relative Funk metric symmetrized}}   
\end{figure}

As usual, the definition is extended to the case where $p=q$ by setting in this case $H(p,q)=0$.
 \end{definition}

 We recall that given four points $p_1, p_2, p_3, p_4$ situated in that order on a great circle on the sphere, their \emph{spherical cross ratio}\index{spherical cross ratio} is defined by
\[
[p_1, p_2, p_3, p_4] = \frac{\sin d(p_2, p_4) \sin d(p_3, p_1)}{\sin d(p_3, p_4) \sin d(p_2, p_1)}.
\] 
Its values are in ${\mathbb R}_{\geq 0} \cup \{\infty\}$. The spherical cross ratio is a projectivity  invariant. The meaning is explained in \cite[p. 243]{2012-Hilbert3}. Note that with the introduction of the spherical cross ratio, the projective transformations of the sphere become precisely those that preserve the cross ratio of four aligned points.

 For any pair of points $(p, q)$ in $\Omega_<$, let $a_1 \in K_1$ and $a_2 \in K_2$ be the intersection points between the great circle 
through $p$ and $q$ and the two hypersurfaces $K_1$ and $K_2$, such that $a_1, p, q, a_2$ lie in that order on the arc of great circle $[a_1, a_2] \subset \Omega$.   With this notation, the timelike spherical Hilbert distance associated with the pair $(K_1,K_2)$  is also given by the following formula:
\begin{proposition}\label{prop:SH2} Let $p$ and $q$ be two points in $\Omega$ satisfying $p<q$ and let $[a_1,a_2]$ be the segment of great circle containing $p$ and $q$ with $[a_1,a_2]\cap K_i=a_i$  for $i=1,2$. Then, we have:
\[
H(p, q) =\frac{1}{2} \log [a_1, p, q, a_2].
\]
\end{proposition}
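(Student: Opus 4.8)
The plan is to reduce the identity to a short computation by unwinding, in turn, the definitions of the relative Funk metric $F_1^2$, the reverse Funk metric $\overline{F_1^2}$, and the Hilbert metric $H$, and then to recognize the resulting ratio of products of sines as the spherical cross ratio $[a_1,p,q,a_2]$. First I would evaluate the relative Funk term in the given configuration. Since $p<q$, the four points lie on the great circle $C$ in the order $a_1,p,q,a_2$, so the geodesic ray issuing from $p$ through $q$ meets $K_2$ exactly at $b(p,q)=a_2$, as already noted after the definition of $F_1^2$. Substituting into the defining formula gives
\[
F_1^2(p,q)=\log\frac{\sin d(p,a_2)}{\sin d(q,a_2)}.
\]

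Next I would treat the reverse Funk term. By definition $\overline{F_1^2}(p,q)=F_2^1(q,p)$, where $F_2^1$ is the relative Funk metric obtained by interchanging the roles of the two hypersurfaces, so that $K_1$ now serves as the future hypersurface. When $F_2^1$ is evaluated at the ordered pair $(q,p)$, the relevant ray starts at $q$, passes through $p$, and meets the new future hypersurface $K_1$ at the point $a_1$, again because the order on $C$ is $a_1,p,q,a_2$. Hence
\[
\overline{F_1^2}(p,q)=\log\frac{\sin d(q,a_1)}{\sin d(p,a_1)}.
\]

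Finally I would substitute both expressions into $H(p,q)=\tfrac12\bigl(F_1^2(p,q)+\overline{F_1^2}(p,q)\bigr)$, use additivity of the logarithm to combine the two terms into a single logarithm, and rearrange the four sine factors into the form
\[
\frac{\sin d(p,a_2)\,\sin d(q,a_1)}{\sin d(q,a_2)\,\sin d(p,a_1)},
\]
which is precisely $[a_1,p,q,a_2]$ by the definition of the spherical cross ratio recalled above. This yields the claimed formula $H(p,q)=\tfrac12\log[a_1,p,q,a_2]$.

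The computation itself is elementary, so the only step demanding genuine care --- and the one I expect to be the main obstacle --- is the bookkeeping for the two foot points. One must verify that reversing the ordered pair $(p,q)\mapsto(q,p)$ while simultaneously swapping past and future converts $b(p,q)=a_2$ into the foot point $a_1$, and that the induced positions of $a_1$ and $a_2$ relative to $p$ and $q$ place the four sine factors in the numerator and denominator exactly as prescribed by the cross-ratio convention in the excerpt. Once this matching is confirmed, the identity follows at once.
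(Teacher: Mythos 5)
Your proof is correct and is exactly the intended argument: the paper states this proposition as an immediate consequence of its definitions (with the proof deferred to the companion paper \cite{PY-Timelike-Hilbert}), namely unwinding $F_1^2$, $\overline{F_1^2}=F_2^1(q,\cdot)$ with foot points $a_2$ and $a_1$ respectively, and matching the resulting ratio of sines with the stated convention for the spherical cross ratio $[a_1,p,q,a_2]$. Your bookkeeping of the two foot points and of the numerator/denominator placement agrees with the paper's conventions, so there is nothing to add.
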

The following is Proposition 13.3 of \cite{PY-Timelike-Hilbert}:

\begin{proposition}[Invariance] \label{prop:SHinv}  
The timelike spherical Hilbert metric associated with the pair of convex hypersurfaces $K_1,K_2\subset S^n$ is invariant by the projective transformations of the sphere $S^n$ that preserve setwise each of the two convex hypersurface $K_1,K_2$. \end{proposition}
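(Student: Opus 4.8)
The plan is to reduce the invariance to the projective invariance of the spherical cross ratio, using the closed formula for $H$ supplied by Proposition \ref{prop:SH2}. Let $\phi$ be a projective transformation of $S^n$ preserving each of $K_1$ and $K_2$ setwise. I first record that $\phi$ preserves the whole timelike structure. Since $\phi$ is projective it sends great circles to great circles and preserves convexity; as $\overline{I_i}$ is convex with boundary $K_i$ and $\phi(K_i)=K_i$, the image $\phi(\overline{I_i})$ is again the convex region bounded by $K_i$, hence equals $\overline{I_i}$. Therefore $\phi(I_i)=I_i$ for $i=1,2$, and consequently $\phi(\Omega)=\Omega$.

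Next I would check that $\phi$ preserves the partial order of Definition \ref{def:Partial}, so that the right-hand side of the desired equality is defined. Let $p<q$, witnessed by a great circle $C$ with $C\cap K_1=\{a_1\}$, $C\cap K_2=\{a_2\}$ and $a_1,p,q,a_2$ in that order along the arc $[a_1,a_2]\subset\Omega$. Since $\phi$ is a bijection with $\phi(K_i)=K_i$, we have $\phi(C)\cap K_i=\phi(C\cap K_i)$, so that $\phi(a_1)$ and $\phi(a_2)$ are exactly the intersection points of the great circle $\phi(C)$ with $K_1$ and $K_2$. The restriction of $\phi$ is a homeomorphism from the circle $C$ onto $\phi(C)$, hence preserves or reverses cyclic orientation; in either case the four points $\phi(a_1),\phi(p),\phi(q),\phi(a_2)$ lie in that order along the arc $\phi([a_1,a_2])\subset\Omega$, because for four points on a circle the ordering is unchanged under a global reversal. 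Finally $\phi$ carries supporting hyperplanes of $K_i$ to supporting hyperplanes of $K_i$, so $]\phi(a_1),\phi(a_2)[$ avoids them. Thus $\phi(p)<\phi(q)$.

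It then remains to apply Proposition \ref{prop:SH2} to both pairs and to invoke the projective invariance of the spherical cross ratio recalled above. We obtain
\[
H(\phi(p),\phi(q))=\frac{1}{2}\log[\phi(a_1),\phi(p),\phi(q),\phi(a_2)]=\frac{1}{2}\log[a_1,p,q,a_2]=H(p,q),
\]
the central equality being the cross-ratio invariance under the projective map $\phi$, which completes the proof.

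The step I expect to be the most delicate is the second one, the verification that $\phi$ respects the partial order. The cross-ratio identity itself is formal, but one must ensure that $\phi$ neither interchanges the past hypersurface $K_1$ with the future hypersurface $K_2$ nor permutes $p$ and $q$ along the arc. The hypothesis that $\phi$ fixes \emph{each} $K_i$ setwise (not merely the unordered pair) excludes the first, while the direction-agnostic nature of the ordering of four points on a circle, together with the invariance of the cross ratio under total reversal, excludes the second; once these incidence and ordering issues are settled, the conclusion is immediate.
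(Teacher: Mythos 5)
Your proof is correct and follows essentially the route the paper intends: the paper itself gives no argument beyond citing Proposition 13.3 of \cite{PY-Timelike-Hilbert}, and the underlying proof there is exactly this reduction to the projective invariance of the spherical cross ratio via the formula of Proposition \ref{prop:SH2}. Your explicit verifications---that $\phi$ preserves each $I_i$ (hence $\Omega$), respects the partial order, and carries the endpoint data $a_1\in K_1$, $a_2\in K_2$ and the avoidance of supporting hyperplanes to the corresponding data for $\phi(p)<\phi(q)$---supply precisely the details the paper delegates to the reference.
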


The timelike spherical Hilbert metric $H$ has an underlying timelike Finsler structure which we describe in the paper  \cite{PY-Timelike-Hilbert}. Furthermore, Proposition 13.4 of \cite{PY-Timelike-Hilbert} gives a characterization of a natural class of geodesics for this metric associated with any ordered pair of convex hypersurfaces $(K_1,K_2)$, namely, the  spherical segments of the form $]a_1,a_2[$, equipped with their natural orientation from $a_1$ to $a_2$ and satisfying the following three properties
\begin{enumerate}
\item \label{pr1} $a_1 \in K_1$ and $a_2 \in K_2$; 
\item $]a_1,a_2[$  is not contained in any supporting hyperplane to $K_1$ or to $K_2$;
\item  \label{pr3} the open spherical segment $]a_1,a_2[$ is in the complement of $K_1\cup K_2$ .
\end{enumerate}

The following proposition is  Proposition  13.4 of \cite{PY-Timelike-Hilbert}.

\begin{proposition}\label{Spherical-Hibert-geo}
(a) In a timelike spherical Hilbert geometry $(\Omega, H)$ associated with an ordered pair of convex hypersurfaces $(K_1,K_2)$, the spherical segments of the form $]a_1,a_2[$, equipped with their natural orientation from $a_1$ to $a_2$ and satisfying the following three properties
\begin{enumerate}
\item \label{pr1} $a_1 \in K_1$ and $a_2 \in K_2$; 
\item \label{pr2}  $]a_1,a_2[$  is not contained in any supporting hyperplane to $K_1$ or to $K_2$;
\item  \label{pr3} the open spherical segment $]a_1,a_2[$ is in the complement of $K_1\cup K_2$ 
\end{enumerate}
 are $H$-geodesics with their given orientation. Each such geodesic is isometric to the real line.
 
 (b) The oriented spherical segments contained in the segments  of the form $[a_1,a_2]$  satisfying the properties (\ref{pr1}-\ref{pr3}) above are the unique $H$-geodesics
if and only if the following holds: There are no spherical geodesic segments $[a_1,a_2]$ satisfying properties (\ref{pr1}-\ref{pr3}) with $a_1$ in the interior of an open nonempty spherical segment $J_1\subset K_1$ and $a_2$ in the interior of an open nonempty segment $J_2\subset K_2$, such that  $J_1$ and $J_2$ are coplanar, i.e., contained in a 2-dimensional sphere.
\end{proposition}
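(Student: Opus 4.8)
The plan is to treat the two assertions separately, reducing everything to the two one-sided Funk metrics and, for the finer analysis in part (b), to the affine picture obtained by central (gnomonic) projection.

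For part (a), I would first record that along a fixed oriented spherical segment $]a_1,a_2[$ satisfying (\ref{pr1})--(\ref{pr3}), with $a_1\in K_1$ and $a_2\in K_2$, both one-sided Funk functions are \emph{potential differences}. Writing $u(x)=-\log\sin d(x,a_2)$ and $v(x)=\log\sin d(x,a_1)$ for $x\in\,]a_1,a_2[$, the definitions of $F_1^2$ and $\overline{F_1^2}$ give $F_1^2(p,q)=u(q)-u(p)$ and $\overline{F_1^2}(p,q)=v(q)-v(p)$ whenever $p\le q$ lie on the segment, because the boundary points $b(p,q)$ then coincide with $a_2$ (resp.\ $a_1$) for all such pairs. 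Hence, setting $\psi=\tfrac12(u+v)$, Proposition \ref{prop:SH2} yields $H(p,q)=\psi(q)-\psi(p)$. Additivity of $\psi$-differences is exactly the equality case of the time inequality, so the segment is an $H$-geodesic with its given orientation. Finally $\psi(x)=\tfrac12\log\frac{\sin d(x,a_1)}{\sin d(x,a_2)}$ is continuous and strictly increasing on $]a_1,a_2[$, tending to $-\infty$ as $x\to a_1$ and to $+\infty$ as $x\to a_2$, so $\psi$ is a monotone bijection onto $\mathbb{R}$ realizing the isometry with the real line.

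For part (b) the central point is a reduction lemma: for $p<q<r$ one has $H(p,r)=H(p,q)+H(q,r)$ if and only if equality holds simultaneously in the time inequalities for $F_1^2$ and for $\overline{F_1^2}$. This is immediate from $2H=F_1^2+\overline{F_1^2}$ together with the fact (Proposition 11.12 of \cite{PY-Timelike-Hilbert}) that both one-sided inequalities point the same way: two inequalities of the same sense whose half-sum is an equality must each be equalities. Thus an $H$-geodesic that is not a straight spherical segment produces a triple $p<q<r$ with $q$ off the segment $[p,r]$ at which \emph{both} Funk time inequalities are equalities, and conversely. It then remains to characterize the equality case of a single, say forward, Funk time inequality for non-aligned triples. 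Here I would pass to the affine model: central projection from the centre $O$ sends great circles to affine lines and, being projective, carries the spherical cross ratio to the usual cross ratio (compare Proposition \ref{prop:SHinv}), hence carries the timelike spherical Funk and Hilbert geometries to the classical affine timelike Funk and Hilbert geometries of the projected convex bodies inside an open hemisphere. In this flat picture the forward Funk distance relative to a flat arc $J_2\subset K_2$ depends only on the affine ``height'' of a point with respect to the line carrying $J_2$, so its time inequality degenerates to an equality along any height-monotone path whose forward rays meet $J_2$; symmetrically for $\overline{F_1^2}$ and a flat arc $J_1\subset K_1$. Conversely, strict convexity of the boundary forces the forward (resp.\ reverse) inequality to be strict for non-aligned triples, so equality forces the relevant hitting points to lie on a common affine line, i.e.\ on a genuine flat arc. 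Combining this with the reduction lemma gives the \emph{if} (uniqueness) direction by contraposition: a non-straight $H$-geodesic forces flat arcs $J_1\subset K_1$ and $J_2\subset K_2$ through whose interiors the rays pass, and since the whole configuration lies in the $2$-sphere spanned by $p,q,r$ these arcs are coplanar, which is the excluded configuration. For the \emph{only if} direction, given coplanar arcs $J_1,J_2$ with $a_1$ in the interior of $J_1$ and $a_2$ in the interior of $J_2$, I would work inside the $2$-sphere containing them and exhibit, in the corresponding flat Hilbert strip, a point $q$ slightly off the segment $[a_1,a_2]$ whose forward and backward rays still meet $J_2$ and $J_1$; by the height computation both Funk metrics remain additive through $q$, producing a broken $H$-geodesic and hence non-uniqueness.

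The main obstacle is the precise equality analysis of the Funk time inequality, namely showing that strict convexity of $K_2$ (resp.\ $K_1$) forces strict inequality for non-aligned triples and that equality forces the hitting points to be collinear on a flat arc. I expect the cleanest route is to carry out this computation entirely in the projected affine model, where it becomes the classical statement that affine segments are the unique geodesics of a Funk or Hilbert geometry exactly when the relevant part of the boundary carries no line segment, and then to transport the conclusion back to $S^n$ via the projectivity invariance of the cross ratio. The remaining delicate bookkeeping is keeping the two flat arcs coplanar and verifying that the off-segment point $q$ indeed satisfies the order relation $p<q<r$.
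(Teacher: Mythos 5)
First, a point of comparison: this paper does not actually prove the proposition --- it is quoted as Proposition~13.4 of \cite{PY-Timelike-Hilbert}, so your proposal can only be measured against the strategy of that reference (which works with spherical cross ratios directly) and against the Euclidean-to-spherical transfer described in Section~3 of the present paper. Your part~(a) is correct and complete: along a segment satisfying (1)--(3) both relative Funk distances are potential differences because every forward ray hits $K_2$ at $a_2$ and every backward ray hits $K_1$ at $a_1$, so $H(p,q)=\psi(q)-\psi(p)$ with $\psi(x)=\tfrac12\log\frac{\sin d(x,a_1)}{\sin d(x,a_2)}$; parametrizing by arclength $t$ with $L=d(a_1,a_2)$ one gets $2\psi'(t)=\cot t+\cot(L-t)=\sin L/(\sin t\sin(L-t))>0$, where $L<\pi$ comes from the good-position hypothesis, and the limits $\mp\infty$ at the endpoints give the isometry with $\mathbb{R}$. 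Your reduction lemma for part~(b) (equality for $H$ forces equality in both one-sided Funk time inequalities, since two inequalities of the same sense cannot half-sum to an equality unless both are equalities) is also correct, and the overall architecture --- equality analysis of a single Funk inequality, flat arcs through the hitting points, coplanarity obtained inside the great $2$-sphere spanned by $p,q,r$ --- is the right one.

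The genuine gap is the step where you ``pass to the affine model.'' The gnomonic projection is defined only on an open hemisphere, and you never verify that the configuration you must project --- the three extended arcs $[a_1(p,q),a_2(p,q)]$, $[a_1(q,r),a_2(q,r)]$, $[a_1(p,r),a_2(p,r)]$ with their hitting points --- fits inside one. This is not automatic: good position only bounds each single arc's length by $\pi$, and by Proposition~\ref{prop:anti} the pair $I_1,I_2$ contains an antipodal pair of points, so the geometry of $\Omega$ is genuinely global on the sphere. The one affine model that does exist globally, the cone model $\Phi$ of Section~3, does not let you quote ``the classical statement'' either, because the boundaries of the cones $C_1,C_2$ are ruled by radial segments: any ray of $\partial C_1$ and any ray of $\partial C_2$ are coplanar, so the naive Euclidean non-uniqueness criterion is \emph{always} triggered upstairs. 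One must argue separately that these radial/apex configurations produce nothing downstairs: points on a common ray are incomparable for the partial order (the radial line meets $\overline{C_1}\cup\overline{C_2}$ only at the apex), and a pair of antipodal hitting points $a_1\in K_1$, $a_2=-a_1\in K_2$ forces $\sin d(x,a_1)=\sin d(x,a_2)$, hence $H\equiv 0$ by Proposition~\ref{prop:SH2}; such zero-distance (null) chains are not geodesics in the sense of isometric images of intervals. Only the non-radial flat $2$-sectors of $\partial C_i$, i.e.\ cones over great-circle arcs $J_i\subset K_i$, survive this dictionary, and that is exactly the content that must be proved rather than imported. So either carry out the equality analysis directly on the sphere with the spherical cross ratio and the sine rule (the route of \cite{PY-Timelike-Hilbert}), or use the cone model together with an explicit exclusion of the radial and antipodal degeneracies. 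A smaller inaccuracy of the same kind: you appeal to ``strict convexity of the boundary'' to get strictness for non-aligned triples, but no strict convexity is assumed anywhere; the statement that needs proving is that, for a non-aligned triple, equality in the Funk time inequality forces the segment joining the hitting points to lie inside $K_2$ (resp.\ $K_1$) with $b(p,r)$ in its interior, using convexity alone via the perspectivity argument in the equality case.
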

 
The reader will notice the formal similarities with the properties and characterizations of geodesics in the classical Hilbert geometry; cf. also the characterization of geodesics in the timelike Euclidean Hilbert geometry recalled in the next section.

% 
%
%
%
%  \begin{definition}[Order relation]
%We introduce a partial order relation between points of $\Omega$.  For any two distinct points $p$ and $q$ in $\Omega$, we write
%\[
%p < q 
%\]
%if the following three properties are satisfied:
%\begin{enumerate}
%\item The Euclidean ray $R(p, q)$ from $p$ through $q$ intersects the hypersurface $K$;
% \item $R(p, q)$ does not belong to a supporting hyperplane of $K$;
% \item the closed Euclidean segment $[p,q]$ does not interesect $K$.
% \end{enumerate}
%\end{definition}
%When $p<q$, we say that $q$ \emph{lies in the future of} $p$ and that $p$ \emph{lies in the past of} $q$.   We write $p\leq q$ if either $p<q$ or $p=q$.
%
%We denote by $\Omega_<$ (resp. $\Omega_{\leq}$) the set of ordered pairs $(p,q)$ in $\Omega\times\Omega$ 
%satisfying $p<q$  (resp. $p\leq q$).   The set  $\Omega_<$ is disjoint from the diagonal set $\{(x, x) \,\,| \,\, x \in 
%\Omega\} \subset \Omega\times\Omega$.
   
 \section{Timelike  Euclidean Hilbert geometry induces timelike spherical Hilbert geometry}  
 
In the context of Hilbert geometry, a timelike spherical Hilbert metric can be represented as a timelike Euclidean Hilbert geometry in a natural manner (Section 9 of  \cite{PY-Timelike-Hilbert}).  Recall that the timelike Euclidean Hilbert geometry is realized in the complement of two non-intersecting open convex sets $I_1$ and $I_2$ bounded by convex hypersurfaces $K_1$ and $K_2$ respectively, 
 where the former is considered as the past set and the latter the future set.   Denote the complement of $I_1 \cup I_2$ by $\Omega$.
 The associated Euclidean timelike Hilbert distance $H_E(p, q)$ defined for $p$ and $q$ where $q$ lies in the future of $p$ is defined (Definition 9.1 of  \cite{PY-Timelike-Hilbert}) by taking the logarithm of Euclidean cross ratio,
 \[
 H_E(p, q) = \frac12 \log [a_1, p, q, a_2]. 
 \] 
where $a_1 \in K_1$ and $a_2 \in K_2$ are the intersection points of the line through $p$ and $q$ and the hypersurfaces $K_1$ and $K_2$ such that $a_1, p, q, a_2$ are aligned on the line segment $[a_1, a_2] \subset \Omega$ in that order.   
 
Proposition 9.2 of \cite{PY-Timelike-Hilbert} gives a characterization of a natural class of geodesics for this Euclidean timelike Hilbert  associated with an ordered pair of convex hypersurfaces $(K_1,K_2)$, namely, the Euclidean segments of the form $]a_1,a_2[$ equipped with their natural orientation from $a_1$ to $a_2$ and satisfying the following three properties
\begin{enumerate}
\item \label{pr1} $a_1 \in K_1$ and $a_2 \in K_2$; 
\item $]a_1,a_2[$  is not contained in any supporting hyperplane to $K_1$ or to $K_2$;
\item  \label{pr3} the open spherical segment $]a_1,a_2[$ is in the complement of $K_1\cup K_2$ .
\end{enumerate}
 
Now consider  the projection map $ \mathbb{R}^{n+1} \setminus \{0\} \rightarrow S^n$ defined  by sending $x$ to the intersection point of the ray $Ox$ and the unit sphere. This projection induces a map   
\[
\Phi: (\mathbb{R}^{n+1} \setminus \{0\}) \setminus (C_1 \cup C_2) \rightarrow S^n \cap (I_1 \cup I_2)     
\]
where $C_i$ is the open cone in $\mathbb{R}^{n+1}$ whose vertex is the origin $O$ and which is spanned by the open spherical region $I_i$.   Note then that $C_1$ and $C_2$ constitute a pair of open non-intersecting convex sets in $\mathbb{R}^{n+1}$, and the complement of their union $C_1 \cup C_2$ 
has a natural timelike Euclidean Hilbert metric $H_E$.  As the map $\Phi$ preserves the cross ratio (where it is understood that on the sphere, one takes the spherical cross ratio), we have the following (see Fig. \ref{fig3}a, \ref{fig3}b): 
 
\begin{theorem}
The map $\Phi$ is distance-preserving in the sense that
\[
H_E(x, y) = H(\Phi(x), \Phi(y))
\] for $x, y$ in $\mathbb{R}^{n+1} \setminus \{0\}$.  Each geodesic segment for $H_E$, represented by a Euclidean line segment,  is sent to a geodesic segment for $H$, that is, an arc of great circle.  

\end{theorem}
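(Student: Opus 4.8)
The plan is to reduce the whole statement to a single planar fact---that central projection from the apex $O$ carries the Euclidean cross ratio of four aligned points to the spherical cross ratio of their radial images---and then to feed this into the two cross-ratio expressions for $H_E$ and $H$. So first I would fix $x,y$ with $y$ in the Euclidean future of $x$ (this is the only case in which $H_E$ is defined), and pass to the $2$-plane $P$ spanned by $O$ and the line $L$ through $x$ and $y$; since $L$ avoids $O$, the plane $P$ is well defined. Radial projection restricted to $P\setminus\{0\}$ is central projection onto the great circle $G=P\cap S^n$, and it sends $L$ onto an arc of $G$. Because $[a_1,a_2]$ is a bounded segment of a line missing $O$, the entire segment lies in an open half-plane of $P$, so $\Phi$ maps it homeomorphically onto an arc of $G$ of length $<\pi$ and preserves the linear order along it; in particular the Euclidean order $a_1,x,y,a_2$ is carried to the spherical order $\Phi(a_1),\Phi(x),\Phi(y),\Phi(a_2)$ on $G$, and future maps to future.

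The heart is then the cross-ratio identity. Parametrize the points of $L$ by the angle $\alpha$ that the ray $OX$ makes with the perpendicular from $O$ to $L$, and set $h=d(O,L)$. A point at angle $\alpha$ sits at Euclidean distance $h/\cos\alpha$ from $O$ and at signed coordinate $h\tan\alpha$ along $L$, so two points at angles $\alpha,\beta$ satisfy $|XY|=h\,|\tan\alpha-\tan\beta|=h\,\dfrac{|\sin(\alpha-\beta)|}{\cos\alpha\,\cos\beta}$. Since $\alpha-\beta$ is exactly the angle subtended at $O$, it equals the spherical distance between the images, whence $|XY|=h\,\dfrac{\sin d(\Phi X,\Phi Y)}{\cos\alpha\,\cos\beta}$. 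Substituting the four such chord lengths into the Euclidean cross ratio, the factor $h$ and all four cosines cancel in pairs, leaving precisely the spherical cross ratio of the four images. (This is the projective invariance of the spherical cross ratio cited after Proposition~\ref{prop:SH2}; I would prefer to exhibit the cancellation explicitly so the identity is self-contained.) Comparing $H_E(x,y)=\tfrac12\log[a_1,x,y,a_2]$ with Proposition~\ref{prop:SH2} then gives $H_E(x,y)=H(\Phi(x),\Phi(y))$.

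For the geodesic statement I would observe that $\Phi$ sends Euclidean line segments to arcs of great circles, the boundary cone $\partial C_i$ to the hypersurface $K_i$ bounding $I_i$ (since $C_i$ is the cone spanned by $I_i$), supporting hyperplanes of $C_i$ to supporting hyperspheres of $I_i$, and the complement of $C_1\cup C_2$ to $\Omega$. Hence the three defining properties of the $H_E$-geodesics (endpoints on $K_1$ and $K_2$, not lying in a supporting hyperplane, interior in the complement of $K_1\cup K_2$) are preserved termwise, so each such segment is carried to a spherical segment satisfying the corresponding three properties, which by Proposition~\ref{Spherical-Hibert-geo}(a) is an $H$-geodesic isometric to the real line.

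The main obstacle I anticipate is bookkeeping rather than computation: ensuring that the point $a_i$ where $L$ meets $\partial C_i$ really projects to the point where $G$ meets $K_i$, with the ordering intact, and that no accidental collinearity with $O$ degenerates the planar picture. Both are guaranteed by the hypothesis that $K_1,K_2$ are in good position, whose second clause forces the relevant arc of $G$ to have length $<\pi$, so that $\Phi$ is genuinely order-preserving on $[a_1,a_2]$ and the correspondence $a_i\mapsto\Phi(a_i)$ is unambiguous.
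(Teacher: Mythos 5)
Your proposal is correct and follows essentially the same route as the paper: the paper's entire argument is the remark that $\Phi$ preserves the cross ratio (Euclidean upstairs, spherical downstairs), which combined with the two formulas $H_E=\tfrac12\log[\,\cdot\,]$ and $H=\tfrac12\log[\,\cdot\,]$ and with Proposition~\ref{Spherical-Hibert-geo}(a) for the geodesic claim gives the theorem. What you add---the explicit trigonometric verification $|XY|=h\,\sin d(\Phi X,\Phi Y)/(\cos\alpha\cos\beta)$ with the cancellation of $h$ and the cosines, and the order-preservation bookkeeping on $[a_1,a_2]$---is exactly the detail the paper leaves implicit, not a different method.
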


%\begin{figure}[!ht] 
%\centering
%\includegraphics[width=0.55\linewidth]{Figure_3a.eps, Figure_3b.eps}    \caption{\small {Euclidean vs Spherical models}   \label{cone construction}  
%\end{figure}

\begin{figure}
\label{fig3}
\centering
 \includegraphics[width=1\linewidth]{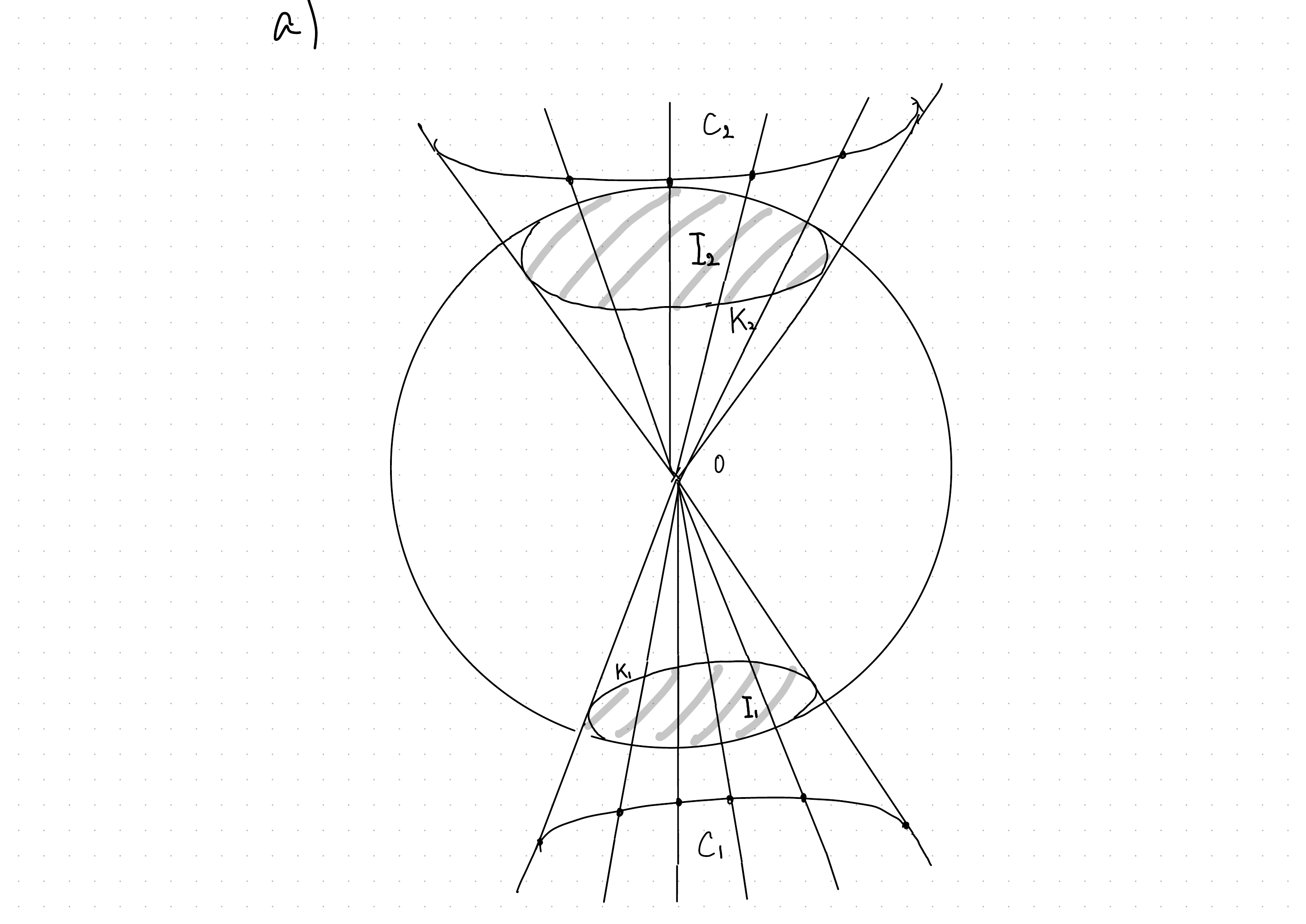}  
 \includegraphics[width=1\linewidth]{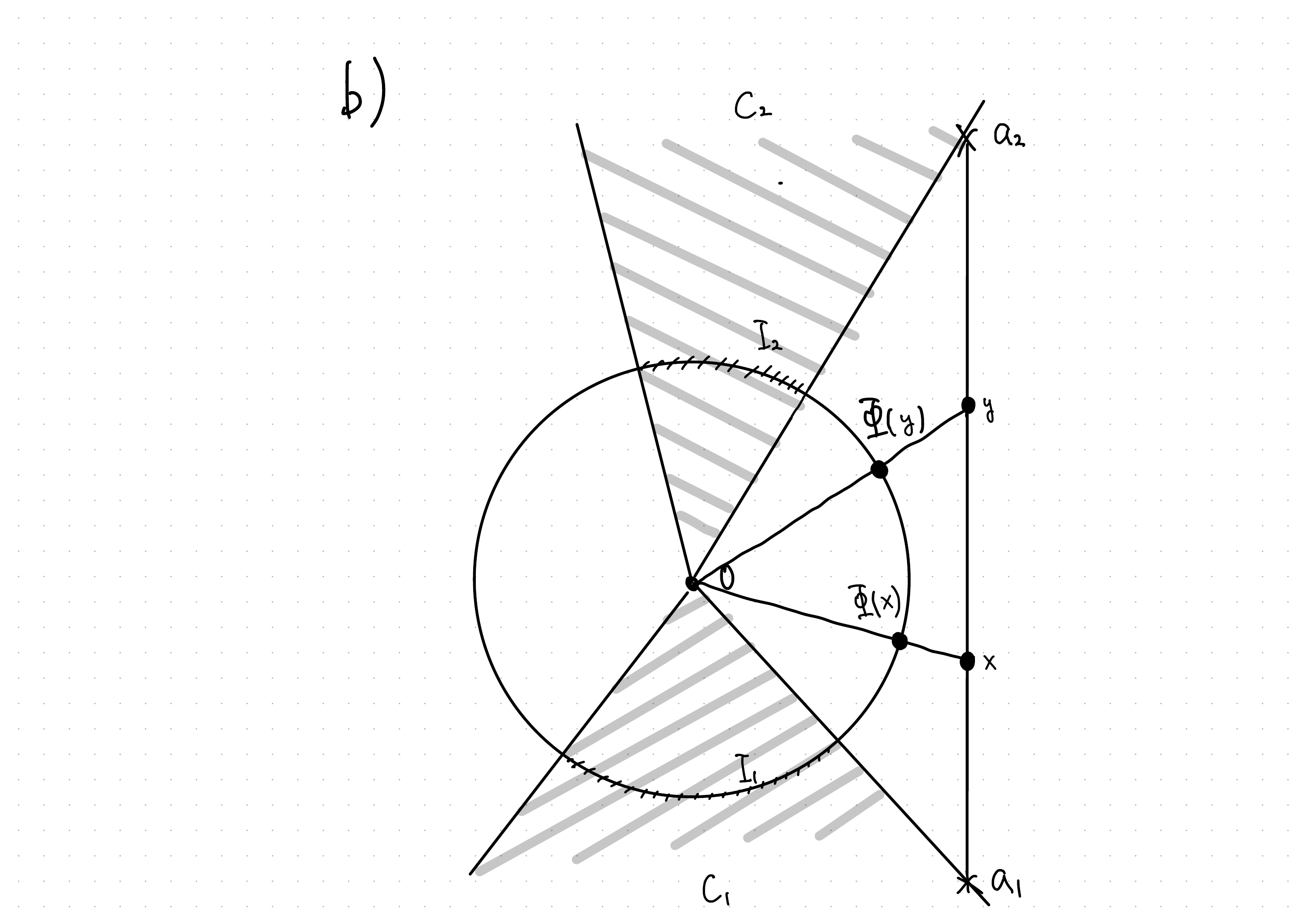}  
   \caption{\small {{Euclidean vs. spherical models}}}  
\end{figure}

 Note that the distance preservation occurs in the setting of the Perron-Frobenius Theorem (see, for example, \cite{Samelson}) where the projection is instead defined from the space of $n \times n$ matrices with positive components to the $(n^2-1)$-dimensional (spherical) simplex with the classical Hilbert metric,  
\[
\Phi: (\mathbb{R}_{>0})^{n^2} \rightarrow \mbox{ the first orthant} \cap S^{(n^2-1)}.
\]

   %
%\begin{definition}\label{prop:SH2} Let $p$ and $q$ be two points in $\Omega$ satisfying $p<q$ and let $[a_1,a_2]$ be the segment of great containing $p$ and $q$ with $[a_1,a_2]\cap K_i=a_i$  for $i=1,2$. Then, we have:
%\[
%d(p, q) =\frac{1}{2} \log [a_1, p, q, a_2].
%\]
%%\end{definition}
%
%This formula for the timelike spherical Hilbert geometry in terms of the cross ratio implies the following: 
% 
%\begin{proposition}[Invariance] \label{prop:SHinv}  
%The timelike spherical Hilbert geometry associated with the pair of convex sets $K_1,K_2\subset S^n$ is invariant by the projective transformations of the sphere $S^n$ that preserve setwise each of the two convex sets $K_1,K_2$. \end{proposition}

%The timelike spherical Hilbert geometry $(\Omega, d)$ has an underlying timelike Finsler structure which we describe in the next section. For that, we need first to talk about $H$-geodesics. 

\section{Simplical decomposition and geometric charts for $S^2$}
  \label{s:decomposition}

 The sphere $S^2$ is considered to be the unit sphere centered at the origin $O$ of $\mathbb{R}^{3}$ equipped with the Cartesian coordinate system 
 $
 x = (x_1, x_2 , x_{3}) 
 $.  
We define a \emph{standard simplex} to be  the intersection of $S^2$ with an \emph{orthant} of $\mathbb{R}^{3}$, that is,  a subspace of $\mathbb{R}^{3}$ defined by setting each of the coordinates to have a fixed sign. Thus,  we have eight standard spherical simplices, each one defined by a set of inequalities of the type 
\[
x_1 \lessgtr 0, x_2 \lessgtr 0, x_3 \lessgtr0.
\] 
Consider a pair of antipodal such simplices $\Delta_2$ and $\tilde{\Delta}_2$ in $S^2$,
\[
\Delta_2 = \{(x_1, x_2, x_3) \,|\, \forall j \,\, x_j >0 \mbox{ and } \sum_{i=1}^{3} (x_i)^2 = 1  \}
\]
and 
\[
\tilde{\Delta}_2 = \{(x_1, x_2,  x_3) \,|\, \forall j \,\, x_j <0 \mbox{ and } \sum_{i=1}^{3} (x_i)^2 = 1  \}.
\]

For each point $x$ of $S^2$ none of whose coordinates is zero, we introduce a symbolic expression called multi-sign
\[
\mathrm{MS}(x) = (\pm, \pm, \pm)
\]
where the $i$-th sign  is that of the $i$-th component.   For example, the multi-sign $\mathrm{MS}(x)$ of a point $x$ in $\Delta_2$ is $(+, +, +)$, while $\mathrm{MS}(y)$ for a point $y$ in $\tilde{\Delta}_2$ is $(-, -, -)$.

We note that any two spherical simplices, which are also two geodesic triangles,  are projectively equivalent, since an arbitrary spherical simplex is projectively equivalent (by stereographic projection) to a Euclidean 2-simplex, that is, a triangle, and two arbitrary Euclidean triangles are projectively equivalent. 
In particular, the fact that two spherical simplices $D_1$ and $D_2$ are projectively equivalent implies that  the timelike Hilbert geometries of $S^2-(D_1\cup \tilde{D_1})$ and $S^2-(D_2\cup \tilde{D_2})$ are isometric, where for $i=1, 2$, $\tilde{D}_i$ is the antipodal simplex of $D_i$.

 Let $\mathbb{U}_3$ be a hemisphere of $S^2$ with pole $C_3=(0, 0, 1)$ and let $\Pi_{3} \subset \mathbb{R}^{3}$  be the hyperplane $\{x_3 =1 \}$ tangent to the northern hemisphere $\mathbb{U}_3$  at $C_3$ (See Figure 4a-1.) We also let $\tilde{\mathbb{U}}_3$ to be the southern hemisphere antipodal to $\mathbb{U}_3$. Its pole is the point $\tilde{C}_3 := (0, 0, -1)$. We let $\tilde{\Pi}_{3}$ be the hyperplane $\{x_1 =-1 \}$.
The plane $\Pi_{3}$ has a natural coordinate system $t^{(1)}= (x_2, x_3)$, and so does $\tilde{\Pi}_{3}$.  

The \emph{stereographic projection} $\pi_3$ associated with $\mathbb{U}_3$ centered at $C_3$ is the map  $\pi_3: \mathbb{U}_3 \to \Pi_{3}$  which sends each point $x$ of $\mathbb{U}_3$ to the intersection of the line $Ox$ with $\Pi_{3}$.  

$\mathbb{U}_1, C_1, \Pi_{1},\mathbb{U}_2, C_2, \Pi_{2}$ are defined accordingly.

Now note that the northern hemisphere $\mathbb{U}_3$ has,  on its equatorial boundary $\{x_1 =0\}$, four points $C_1, \tilde{C}_1, C_2, \tilde{C}_2$ where each of the $x_1$ and  $x_2$ coordinate axes  intersects the unit sphere.    The antipodal pair $C_i$ and    
$\tilde{C}_i \quad (i =1,2)$ determines a family of great circles $S^1$ passing through them, which gives a geodesic foliation of the open hemisphere $\mathbb{U}_1$. Each leaf $\ell$ of this foliation determines a $2$-plane $\Pi_{\ell}$ containing $\ell$ and the origin of $\mathbb{R}^3$.  The open semicircle $\ell$ is sent by the stereographic projection $\pi_3: \mathbb{U}_3 \rightarrow \Pi_3$ to a line  
$\mathcal{P}^{(i)}_{\ell}$ which is the intersection of the $2$-plane $\Pi_{\ell}$ with the $2$-plane $\Pi_{C_3} =\{x_3 =1 \}$.  (See Figure 4a-2.) 

The preceding observation says that given an arbitrary point $x$ in $\mathbb{U}_3$, there exists a uniquely determined pair of great circles $\ell^1(x),  \ell^2(x)$ intersecting  at $x$. (See Figure 4a-2.) The images of the great circles by the stereographic projection $\pi_3$ are respectively the pair of lines $\mathcal{P}^{(3)}_{\ell^1(x)}, \mathcal{P}^{(3)}_{\ell^2(x)}$  intersecting  at $\pi_3(x)$ perpendicularily, parallel to $x_1$ and $x_2$-axes in $\mathbb{R}^3$. (See Figure 7a.)   Hence at $\pi_3(x)$, $\mathcal{P}^{(3)}_{\ell^1(x)}, \mathcal{P}^{(3)}_{\ell^2(x)}$  constitute a double cone $J^\pm(\pi_3(x))$, obtained by translating the first and third orthants of $\Pi_{3}$.  We define the interior of  $J^+(\pi_3(x))$, consisting of points whose coordinates are all greater than those of $\pi_3(x)$, to be the future set of $\pi_3(x)$  and $J^-(\pi_3(x))$, the past set of $\pi_3(x)$. 

Note that the entire sphere is covered by the collection of charts 
\[
\mathbb{U}_1, \mathbb{U}_2 , \mathbb{U}_{3}, \tilde{\mathbb{U}}_1, \tilde{\mathbb{U}}_2 , \tilde{\mathbb{U}}_{3}.
\]
Each $\mathbb{U}_i$ contains the spherical simplex $\Delta_2$, the future convex set,  and each $\tilde{\mathbb{U}}_i$ contains the spherical simplex $\tilde{\Delta}_2$,  the past convex set. The image of the simplex  in $\pi_i(\mathbb{U}_i)$ is one of the four orthants  in the $3$-dimensional hyperplane $\{ x_i =1\}$, while $\tilde{\pi}_i(\tilde{\mathbb{U}}_i)$ is one of the four orthants  in the hyperplane $\{ x_i =-1\}$.  Any point $x$ in $\Omega = S^2 \setminus (\Delta_2 \cup \tilde{\Delta}_2)$ is covered by at least two local charts, $\mathbb{U}_i$ whose pole is a vertex of $\Delta$, and $\tilde{\mathbb{U}}_j$ whose pole is a vertex of $\tilde{\Delta}$.  For example, if $\mathrm{MS}(x) = (+, -, +)$, then $x$ lies in $\mathbb{U}_3$ as well as $\tilde{\mathbb{U}}_2$.    (See Figure 4c.) 
%\begin{figure}[!ht] 
%\centering
%\includegraphics[width=0.45\linewidth]{Figure_4a1.eps, Figure_4a2, Figure_4b.eps}    \caption{\small {local charts for timelike spherical Hilbert geometry}   \label{local charts}  
%\end{figure}

\begin{figure}
\label{fig4}
\centering
 \includegraphics[width=.9\linewidth]{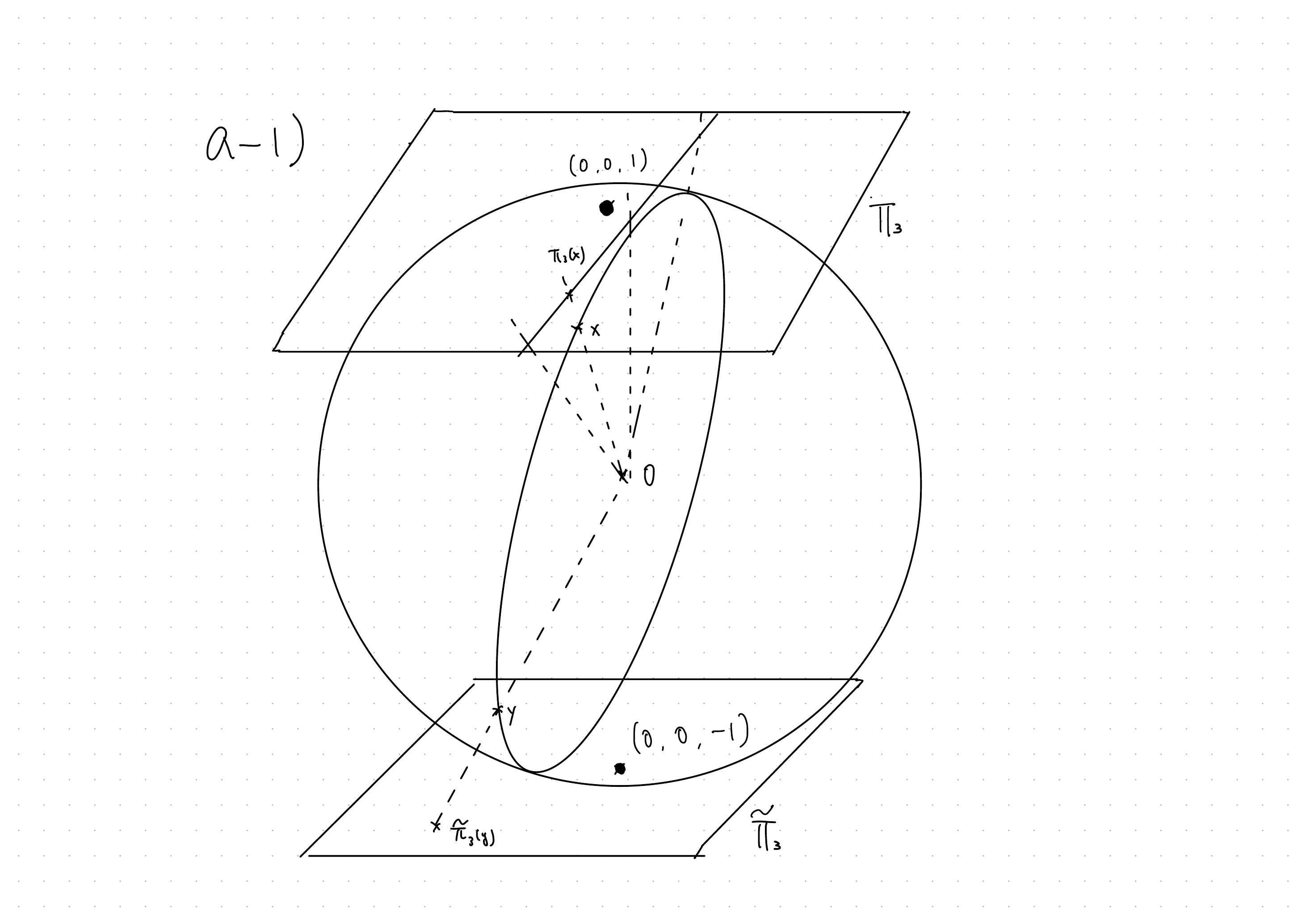}  
 \includegraphics[width=.8\linewidth]{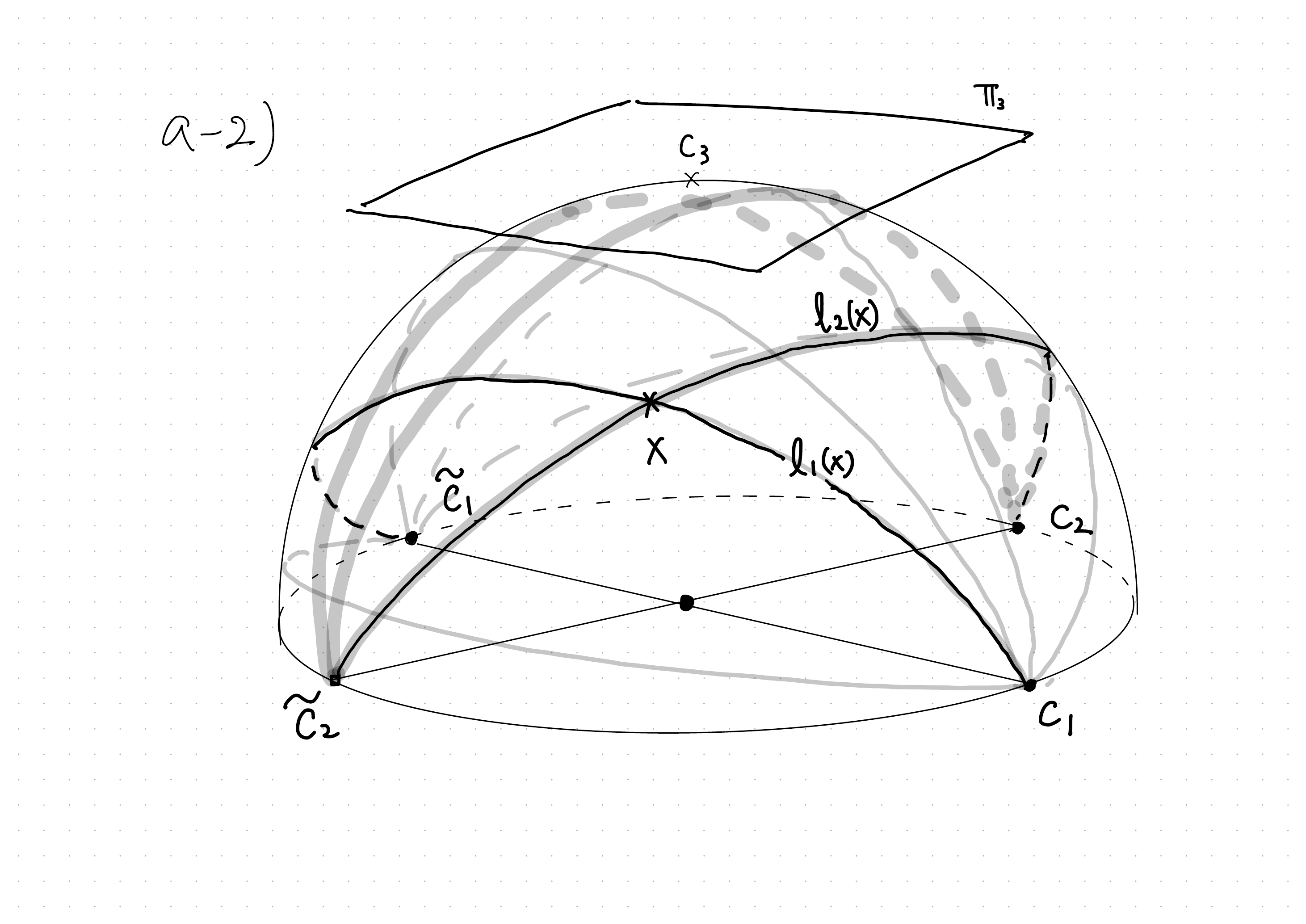}   
 \includegraphics[width=1\linewidth]{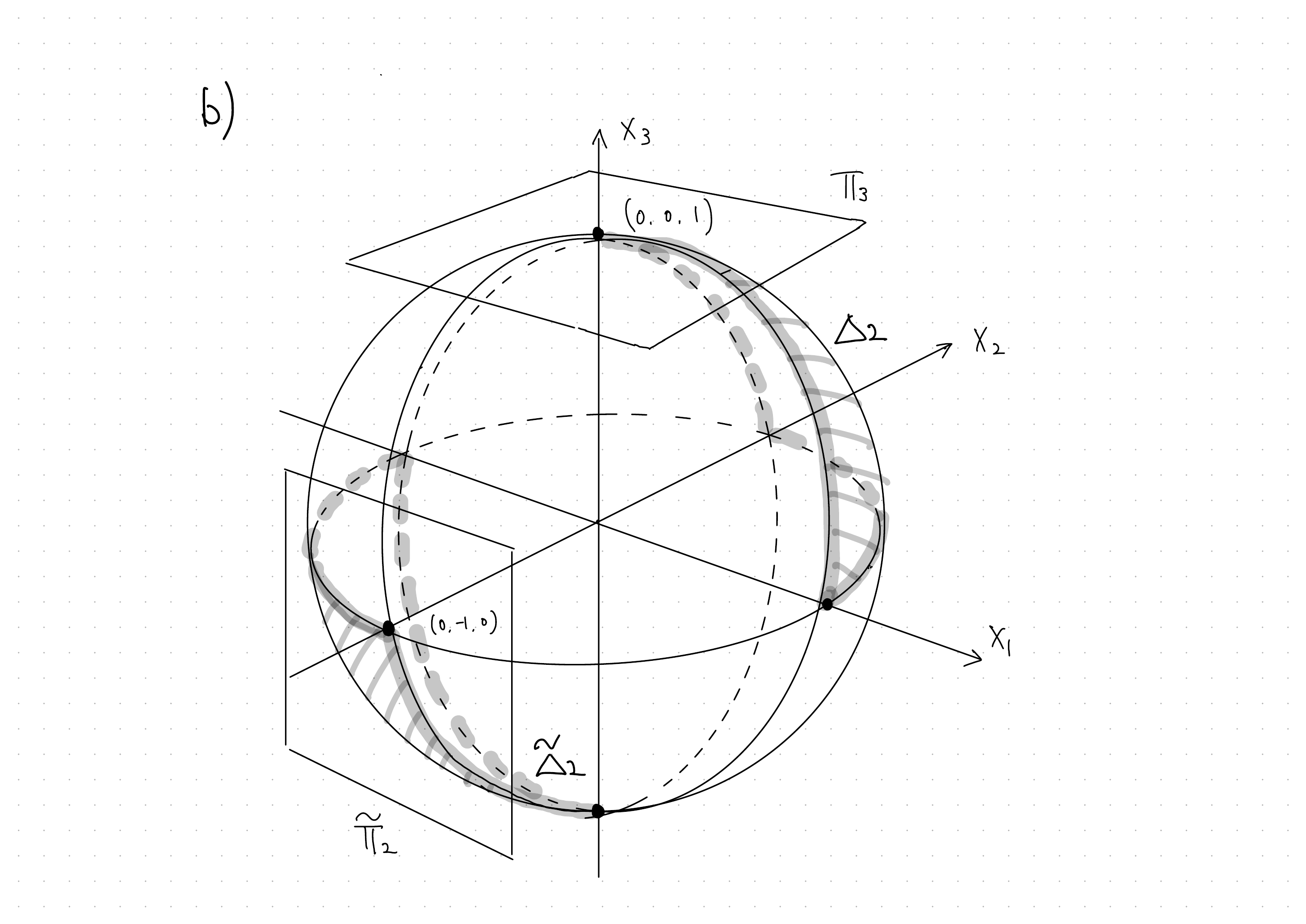}  
   \caption{\small {Local charts for timelike spherical Hilbert geometry}}
\end{figure}

     \section{The Hilbert metric of the $2$-simplex}\label{sec:Hilbert2}

Our main concern in this paper is the timelike Hilbert geometry of the 2-simplex, but we start by recalling the (non timelike) Hilbert geometry of the 2-dimensional simplex, that is, the triangle in the plane, for which explicit formulae are known. 
This is described by Phadke in his paper \cite{Phadke}. 
We start with this case because there are analogies between our approach to the  timelike Hilbert geometry of the 2-simplex and this simpler case of the (non-timelike) Hilbert geometry of the 2-simplex.

Any two triangles in the plane are projectively equivalent, and thus any triangle is projectively equivalent to the first quarter plane (or quadrant) of $\mathbb{R}^2$, $\Delta=\{(x_1,x_2)\in\mathbb{R}^2 \,| \, x_1>0, x_2>0\}.$ Therefore, the Hilbert metric of any triangle is isometric to the Hilbert metric of the quarter plane $\Delta$. Thus, it suffices to write the explicit formulae for the latter case. 
 
 The Hilbert distance $H_{\Delta}(p,q)$ between two points $p$ and $q$ in $\Delta$ is equal to half of the logarithm of the cross ratio of the ordered quadruple $a,p,q,b$ where $a$ and $b$ are the intersection points of the line $pq$ with the boundary of the quarter plane (they are either on the coordinate axes, or at infinity).
  The result depends on the relative position of the points $p$ and $q$ with respect to the two axes. There are 6 cases  (see Figure 5a):

\begin{itemize}

 \item  $H_{\Delta}(p,q)= \frac{1}{2}\bigl(\log (p_2/q_2)+ \log (q_1/p_1)\bigr)$ if $a$ is on the $y$ axis and $b$ on the $x$ axis;
 \item  $H_{\Delta}(p,q)= \frac{1}{2}\bigl(\log (q_2/p_2)+ \log (p_1/q_1)\bigr)$ if $a$ is on the $x$ axis and $b$ on the $y$ axis;
 \item $H_{\Delta}(p,q)= \frac{1}{2}\log (q_1/p_1)$ if $a$ is on the $y$ axis and $b$ at infinity;
\item $H_{\Delta}(p,q)=  \frac{1}{2}\log (p_1/q_1)$ if $a$ is at infinity and $b$ on the $y$ axis;
 \item $H_{\Delta}(p,q)= \frac{1}{2}\log (q_2/p_2)$ if $a$ is on the $x$ axis and $b$ at infinity;
 \item $H_{\Delta}(p,q)= \frac{1}{2}\log (p_2/q_2)$ if $a$ is at infinity and $b$ on the $x$ axis.
\end{itemize}

 The following formula combines all these cases: 

\begin{equation} 
H_{\Delta} (p,q)=\frac{1}{2}\max\biggl\{
\Bigl|\log \frac{p_1}{q_1}\Bigr|, 
\Bigl|\log \frac{p_2}{q_2}\Bigr|,
\Bigl|\log \frac{p_1 \, q_2}{q_1 \, p_2} \Bigr| \biggr\}.
\end{equation}
\medskip

Note that the  Euclidean 2-simplex is projectively equivalent to the spherical simplex in the unit 2-sphere, with local chart given by $(\mathbb{U}_3, \pi_3)$, in such a way that the spherical simplex $\{ x \in S^2 \, | \, \mathrm{MS}(x) = (+, + , +)\}$ is sent to the first quadrant $Q_1 \cong \Delta$.  Here, as before, projective equivalence means the existence of a homeomorphism which preserves the cross ratio of 4 aligned points, being understood that on the sphere we use the spherical cross ratio (see, for example \cite{2012-Hilbert3}).  In this sense, the Hilbert geometry of the simplex on the sphere can be regarded as a classical (non-timelike) spherical Hilbert geometry.  We remark that in the present situation, the antipodal simplex on $S^2$ is not apparent, yet it is still present in the projective geometry of $S^2$.   
  
%\begin{figure}[!ht] 
%\centering
%\includegraphics[width=0.55\linewidth]{Figure_5a.eps, Figure_5b.eps}    \caption{\small {Planar Funk models for Hilbert geometries}   \label{Planar Funk}  
%\end{figure}

\begin{figure}
\label{fig5}
\centering
 \includegraphics[width=1\linewidth]{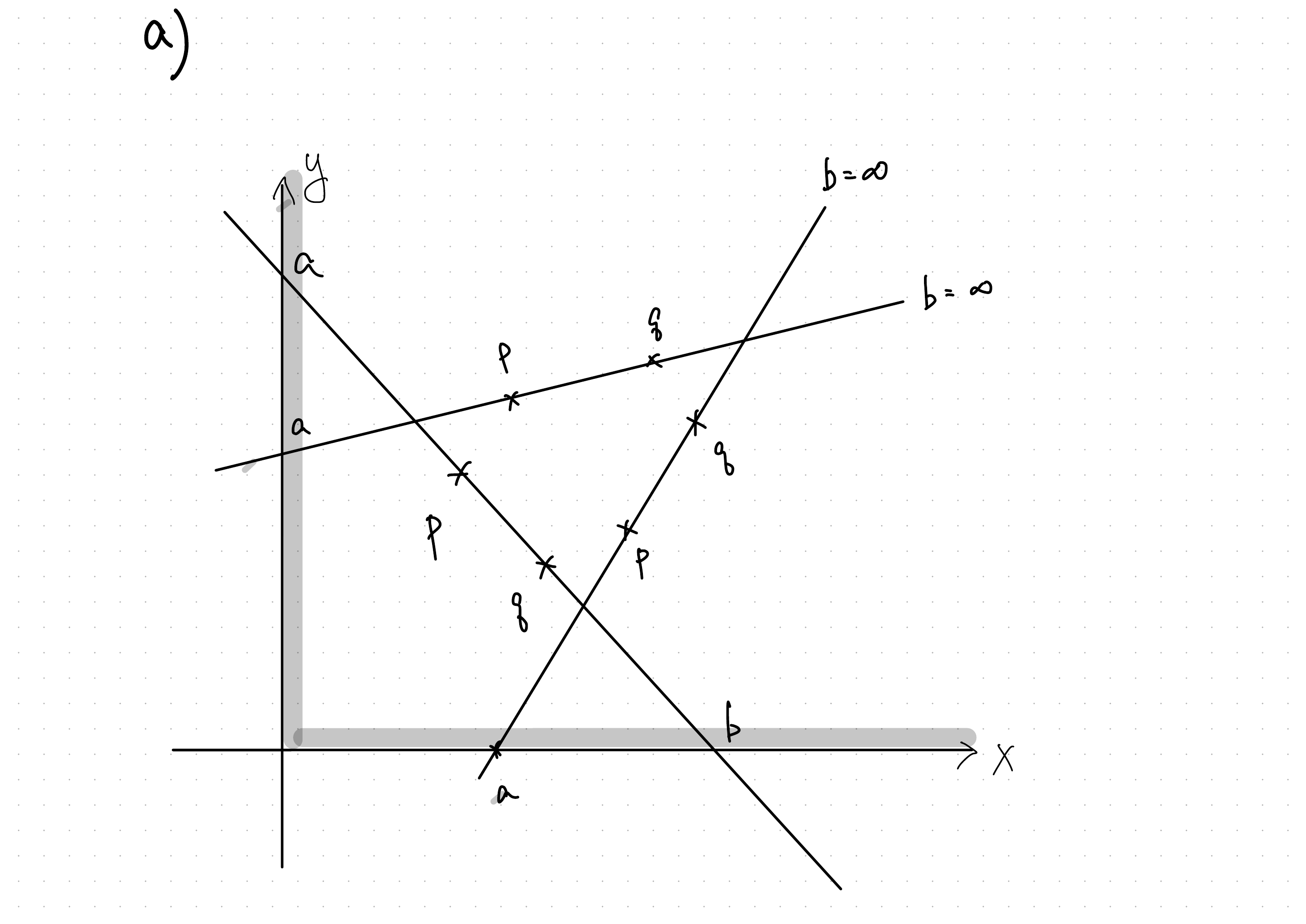}  
 \includegraphics[width=1\linewidth]{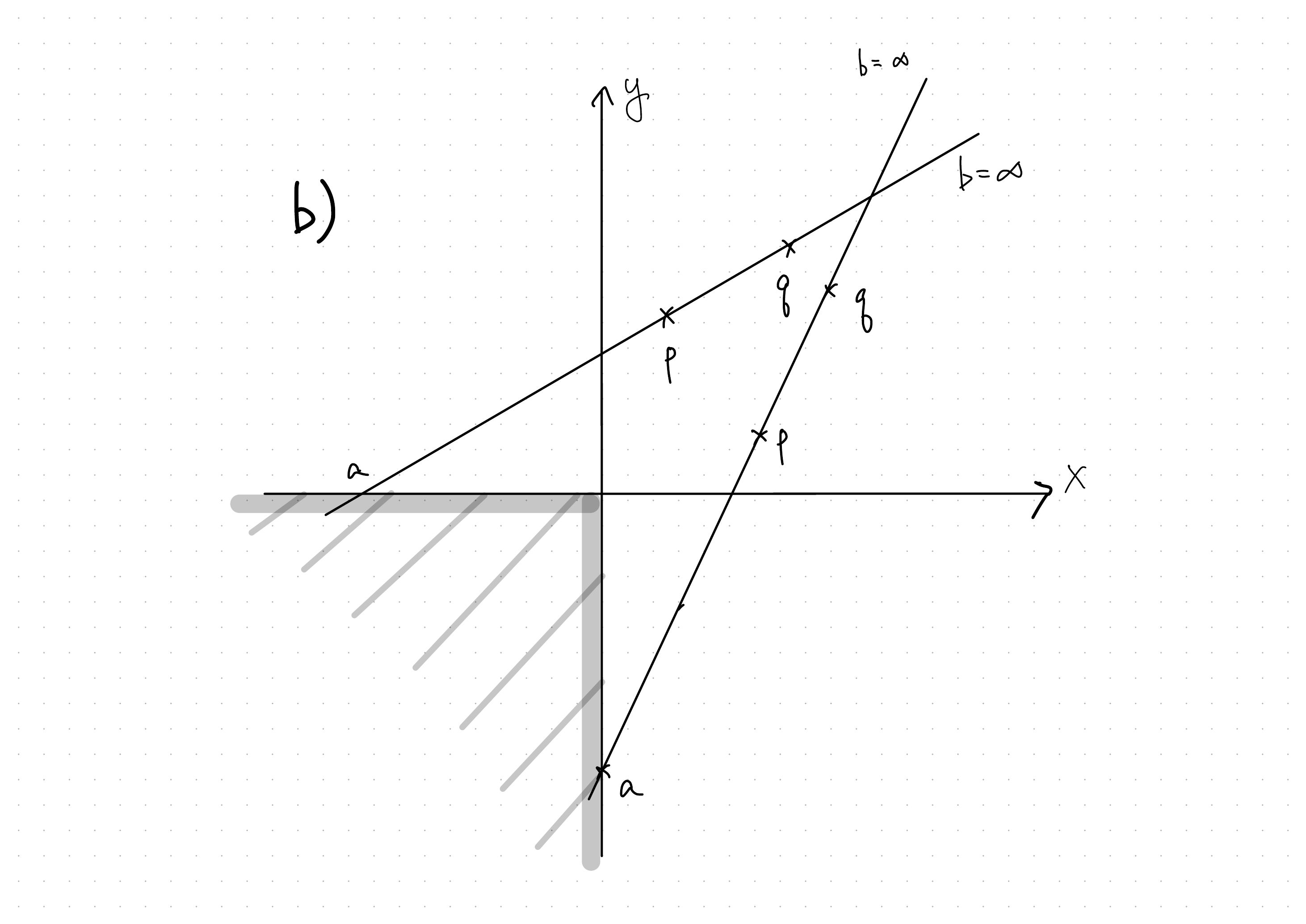}  
   \caption{\small {Planar Funk models for Hilbert geometries}}  
\end{figure}

\section{The timelike Hilbert geometry of the spherical $2$-simplex } \label{s:timelike-simplex}
 In this section, we prove the main theorem. We use the notation of \S \ref{s:decomposition}.
%We use the notation $K_1, K_2, \Omega_<$ and the spherical cross ratio introduced in \S \ref{s:timelike-Hilbert}.

 \begin{theorem}\label{th:1}
Let $\Delta_2$ be the standard $2$-dimensional simplex in the sphere $S^2$, with $\tilde{\Delta}_2$ its antipodal simplex. Then the timelike Hilbert geometry of $\Omega= S^2\setminus ( \Delta_2 \cup \tilde{\Delta}_2)$ is isometric to a union of six copies of normed spaces which are homeomorphic to $S^{1} \times \mathbb{R}$ on which the abelian group $(\mathbb{R}_{> 0})^2  \times \mathbb{Z}_{3} \times  \mathbb{Z}_{2} $ acts isometrically. 
  \end{theorem}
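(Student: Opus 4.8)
The plan is to pull the whole problem back to a Euclidean cone configuration and then read off the metric in logarithmic coordinates. First I would use the distance-preserving map $\Phi$ of Section 3 to replace $(\Omega ,H)$ by the Euclidean timelike Hilbert geometry of $\mathbb{R}^{3}\setminus (C_{1}\cup C_{2})$, where $C_{2}=\{x_{1},x_{2},x_{3}>0\}$ is the cone over the future simplex $\Delta_{2}$ and $C_{1}=-C_{2}$ the cone over the past simplex $\tilde\Delta_{2}$; the faces of $\partial C_{2}$ and $\partial C_{1}$ are the coordinate quadrants $\{x_{k}=0\}$. The six octants making up $\Omega$ are indexed by their multi-sign $\mathrm{MS}(x)$ and split into the two $\mathbb{Z}_{3}$-orbits ``two pluses'' and ``two minuses''. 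I would fix one representative, say $\mathrm{MS}=(+,+,-)$, carry out all computations there, and transport the result to the others by the symmetries below.

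The key computation is the distance formula. For a comparable pair $p<q$ let $a_{1}\in K_{1}$, $a_{2}\in K_{2}$ be the boundary points of Definition~\ref{def:Partial}, lying on the faces $x_{i}=0$ of $C_{1}$ and $x_{j}=0$ of $C_{2}$. Expanding the cross ratio of Proposition~\ref{prop:SH2} along the line, and using that the linear forms cutting out the two faces are $x_{i}$ and $x_{j}$, one finds
\[
H(p,q)=\tfrac12\log[a_{1},p,q,a_{2}]=\tfrac12\log\frac{p_{j}\,q_{i}}{p_{i}\,q_{j}}=\tfrac12\bigl(w_{i}-w_{j}\bigr),\qquad w_{k}:=\log\frac{q_{k}}{p_{k}},
\]
a quantity that is scale invariant, hence well defined on $\Omega$. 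An essential point is that positivity forces $i\neq j$: when $a_{1},a_{2}$ lie on the same coordinate hyperplane they are antipodal on $S^{2}$, the spherical cross ratio collapses to $1$ and $H=0$, and these are exactly the null directions. Thus the comparable pairs split into the six types $(i,j)$, $i\neq j$, and this is the origin of the number six.

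Next I would exhibit each octant as a timelike normed space. In $\mathrm{MS}=(+,+,-)$ the future face is always $j=3$ while the past face is the index $i\in\{1,2\}$ with the smaller $w_{i}$, so in the logarithmic chart $\zeta=(\log(x_{1}/|x_{3}|),\log(x_{2}/|x_{3}|))$ one gets $H(p,q)=\tfrac12\min\bigl(\zeta_{1}(q)-\zeta_{1}(p),\,\zeta_{2}(q)-\zeta_{2}(p)\bigr)$, with future cone the \emph{fixed} quadrant $\{\zeta(q)-\zeta(p)\in(0,\infty)^{2}\}$. I would then verify directly that $N(u)=\tfrac12\min(u_{1},u_{2})$ is positively homogeneous and superadditive on the proper cone $\{u_{1},u_{2}>0\}$ (superadditivity of $\min$ being precisely the reverse triangle inequality), so that the octant is isometric to the timelike normed space $(\mathbb{R}^{2},N)$, the geodesics matching via Proposition~\ref{Spherical-Hibert-geo}. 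The other five octants give the same $N$ after a coordinate change; for instance the time-reversal computation shows $\mathrm{MS}=(-,-,+)$ yields $H=\tfrac12\min(w_{3}-w_{1},w_{3}-w_{2})$, again a $\min$ of two affine forms. For the group, the diagonal torus $\{\mathrm{diag}(\lambda_{1},\lambda_{2},\lambda_{3})\}/\mathbb{R}_{>0}\cong(\mathbb{R}_{>0})^{2}$ preserves each $C_{k}$ setwise, hence acts by isometries (Proposition~\ref{prop:SHinv}) and acts in the chart $\zeta$ as the full translation group, so simply transitively on each copy; the cyclic permutation $\sigma=(123)$ is a further isometry generating a $\mathbb{Z}_{3}$, and the antipodal map $\alpha:x\mapsto -x$ reverses the order but preserves $H$ (the spherical cross ratio is antipode- and reversal-invariant), giving a central time-reversing $\mathbb{Z}_{2}$ that interchanges the two families. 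As $\langle\sigma\rangle\cap\langle\alpha\rangle=1$ and $\alpha$ is central, $\mathbb{Z}_{3}\times\mathbb{Z}_{2}$ permutes the six copies simply transitively, and together with the torus yields the announced abelian action of $(\mathbb{R}_{>0})^{2}\times\mathbb{Z}_{3}\times\mathbb{Z}_{2}$, transitive on the disjoint union of six copies of $(\mathbb{R}^{2},N)$.

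I expect the main obstacle to be the combinatorial bookkeeping that makes the six charts fit together: one must show that every comparable pair occurs in exactly one type $(i,j)$, that $H=\tfrac12(w_{i}-w_{j})$ is the value produced by the \emph{correct} spherical arc (the antipodal/null degeneration above is the trap hidden in the Euclidean representative computation), and that the six normed charts glue along their shared null boundaries $\{H=0\}$ to recover the global cylinder $\Omega\cong S^{1}\times\mathbb{R}$ together with its causal order and the $\mathbb{Z}_{3}\times\mathbb{Z}_{2}$ action. Confirming that $N=\tfrac12\min$ is genuinely a timelike norm, i.e.\ that its unit level set is a concave hypersurface inside a proper cone, and that the gluing is equivariant, is where the real work lies; the torus-invariance and the cross-ratio identity are comparatively routine.
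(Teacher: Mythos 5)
Your proposal is correct, and while it shares the paper's overall skeleton (projective flattening, logarithmic coordinates, the diagonal scalings together with the $\mathbb{Z}_3\times\mathbb{Z}_2$ symmetry), its core mechanism is genuinely different. The paper passes to the stereographic chart $\tilde{\pi}_2$, where the geometry degenerates to a Funk-type metric (past $=Q_3$ at finite distance, future at infinity), and its key step is \emph{infinitesimal}: it computes the Minkowski functional $\frac12 |v|\,(1/r^{+}+1/r^{-})$ case by case at points of two sample quadrants and checks that, after the logarithmic substitution, it is independent of the base point, so that the normed structure is read off from a constant Finsler structure. You stay in the homogeneous cone model of Section 3 and argue at the \emph{metric} level: the cross-ratio identity $H(p,q)=\frac12\log(p_jq_i/p_iq_j)=\frac12(w_i-w_j)$, and its per-octant consequence $H=\frac12\min\bigl(\zeta_1(q)-\zeta_1(p),\zeta_2(q)-\zeta_2(p)\bigr)$, exhibit the log chart as an isometry onto $(\mathbb{R}^2,\frac12\min)$ in one stroke, with the time inequality falling out of superadditivity of $\min$; no integration of a Finsler norm is needed. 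What the paper's route buys instead is the explicit light-cone/indicatrix picture and the discussion of how the norm's presentation changes with the chart (its treatment of $Q_2$), which in your setup is absorbed into the single global formula.

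Three remarks. First, your $\min$ is correct, and the discrepancy with the paper's displayed ``combined'' formula is a slip \emph{in the paper}: its equation (2) asserts $H_{\Delta}(p,q)=\frac12\max\bigl\{|\log(p_1/q_1)|,|\log(p_2/q_2)|\bigr\}$, and the case labels in its formula (3) are swapped relative to its own Case 1/Case 2 derivation. A direct check shows this: for $p=(1,1)$, $q=(2,4)$ the backward ray meets the negative $y$-axis at $(0,-2)$, the cross ratio equals $2$, so $H=\frac12\log 2$, the smaller of the two logs; moreover $\max$ is subadditive rather than superadditive, so it cannot satisfy the time inequality (with $p=(1,1)$, $q=(2,4)$, $r=(8,8)$ the $\max$ formula gives $H(p,r)=\frac32\log 2<2\log 2=H(p,q)+H(q,r)$). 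Your formula agrees with the paper's case-by-case distance computations and with its $r^{\pm}$ derivation, which are the correct parts. Second, your aside that the six pair types $(i,j)$, $i\neq j$, are ``the origin of the number six'' is misleading: the six copies in the theorem are the six octants of $\Omega$; each octant carries two pair types and each type of geodesic crosses two octants, so the two counts agree only by the hexagonal coincidence of dimension $2$ (on $S^n$ they are $2^{n+1}-2$ versus $n(n+1)$). This does not affect your construction, since you do build one normed chart per octant. Third, like the paper you assert an action of the direct product $(\mathbb{R}_{>0})^2\times\mathbb{Z}_3\times\mathbb{Z}_2$; strictly speaking the cyclic permutation conjugates a diagonal scaling to the permuted scaling, so the transformation group generated is a semidirect product $(\mathbb{R}_{>0})^2\rtimes\mathbb{Z}_3\times\mathbb{Z}_2$ --- but this imprecision is in the statement itself and in the paper's own proof, not something your argument introduces.
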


\begin{proof}[Proof of Theorem  \ref{th:1}]

Our configuration is to set the antipodal pair of two-dimensional spherical simplices $\Delta_2$ and $\tilde{\Delta}_2$ as the spherical cap in the first orthant and its antipodal counterpart respectively, namely,   
\[
\Delta_2 = \{ x \in S^2 \, | \, \mathrm{MS}(x) = (+, + , +)\}, \qquad \tilde{\Delta}_2 =  \{ x \in S^2 \, | \, \mathrm{MS}(x) = (-, - , -)\}.
\]

We first define precisely the stereographic projection. Note that in the literature this term is at times used differently.  

The stereographic projection $\tilde{\pi}_2$ sends points in the hemisphere $\tilde{\mathbb{U}}_2$ onto a plane $\tilde{\Pi}_2 := \{ x_2 = -1\}$ tangent to the sphere at a vertex $\tilde{C}_2$ of $\tilde{\Delta}_2$,  which we orient so that its normal vector is $(0, -1, 0)$, by the correspondence 
\[ 
x \mapsto Ox \cap \tilde{\Pi}_2.
\] 
This projection sends $\tilde{\Delta}_2$ to a quadrant in $\tilde{\Pi}_2$, which is the third quadrant $Q_3 := \{x_1<0, x_3< 0\}$ in the plane $(x_1,x_3)$. By this projection, our timelike spherical Hilbert metric is isomorphic to the (degenerate)  timelike  Hilbert metric of the plane $\mathbb{R}^2 \setminus Q_3$ in which the past  is the third quadrant $Q_3$ and the future consists of the points at infinity of the first quadrant $\{``r = \infty", 0 < \theta < \pi/2 \}$. It is degenerate in the sense that one of the two convex sets is infinitely far away (see Figure 5b), making the Hilbert metric effectively a Funk metric on $\tilde{\Pi}_2$ (see \S \ref{s:timelike-Hilbert}).  For the sake of simplicity, we use the coordinates $(x, y)$ in place of $ t^{(2)} =(x_1, x_3)$.

We recall from \S \ref{s:timelike-Hilbert}
 that the timelike spherical Hilbert distance $H_{\Delta}(p,q)$ of two points $p$ and $q$ with the partial order $p< q$ in $\Omega$ is equal to half of the spherical cross ratio of an ordered quadruple $(a_1, p, q, a_2)$ aligned on a great circle $\ell$ on $S^2$, in that order. These four points are sent to  $\tilde{\pi}_2(a_1),\tilde{\pi}_2(p),\tilde{\pi}_2(q),\tilde{\pi}_2(a_2)$ in $\tilde{\Pi}_2$, where $\tilde{\pi}_2(a_1) =:a$ is the intersection point of the line $\tilde{\pi}_2(\ell)$ with the boundary of the third quadrant $Q_3 := \{ x< 0, y< 0\}$ and $\tilde{\pi}_2(a_2) =:b$  is at the infinity of the first quadrant.  In coordinates, we write $\tilde{\pi}_2(p) = (p_1, p_2)$ and $\tilde{\pi}_2(q)=(q_1, q_2)$ in the plane $\tilde{\Pi}_{2}$.  The partial ordering $p< q$ is equivalent to the line through $p$ and $q$ having a positive slope. In other words, we have $p_1 \leq q_1$ and $p_2 \leq q_2$.

There are two cases: 

\begin{itemize}
\item $H_{\Delta}(p,q)= \frac{1}{2}\log (q_2/p_2)$ if $a$ is on the negative part of the $x$ axis and  $b$  at infinity;
\item $H_{\Delta}(p,q)= \frac{1}{2}\log (q_1/p_1)$  if $a$ is on the negative part of the $y$ axis and  $b$ at infinity.
\end{itemize}

 The following formula combines the two cases: 

\begin{equation} 
H_{\Delta} (p,q)=\frac{1}{2}\max\biggl\{
\Bigl|\log \frac{p_1}{q_1}\Bigr|, 
\Bigl|\log \frac{p_2}{q_2}\Bigr|
 \biggr\}.
\end{equation}
\medskip

We note now that, from this expression of the Hilbert distance $H_{\Delta}$, the affine map defined on $\tilde{\Pi}_2$ by
\[
x \mapsto \lambda_1 x, \quad y \mapsto \lambda_2 y \qquad (\lambda_i >0)
\]
is distance-preserving.  Hence the multiplicative abelian group $\{(\lambda_1, \lambda_2)\} \cong (\mathbb{R}_{>0})^2$ is a subgroup of the isometry group of the Hilbert metric.

Next,  we shall give formulae for the Finsler structure associated with this  timelike spherical Hilbert metric. We shall show that the timelike metric induced by the timelike Hilbert metric on this quadrant is homogeneous. More precisely, it is isometric to  a union of six copies of timelike normed spaces.  

Since we are giving formulae for the local Finsler structure, there is no loss of generality in restricting our study to the first quadrant in the plane $(x, y)$, which in our context is $\tilde{\Pi}_2$. The Finsler structure on the five other quadrants can be obtained from this special one by the $(\mathbb{Z}_2\times \mathbb{Z}_3)$-symmetry, where  $\mathbb{Z}_2$ is the symmetry generated by the antipodal map of the ambient space 
\[
(x_1, x_2, x_3) \mapsto (-x_1, -x_2, -x_3) 
\] 
and $\mathbb{Z}_3$ is the rotational symmetry group generated by the permutation of the coordinates
\[
(x_1, x_2, x_3) \mapsto (x_2, x_3, x_1).
\]

Let $\mathbf{v} = (v_1,v_2)$ be the coordinates of a vector at a point $\mathbf{x} = (x, y)$ in the first quadrant $Q_1 := \{ (x> 0, y>0 \}$.  We recall the general formula \cite{PY} for the Minkowski functional for the Hilbert metric modeled on the plane. Given a point x and a tangent vector $v$ at $\mathbf{x}$, let $r^+ = r^+(\mathbf{x}, \mathbf{v})$ be the distance between $\mathbf{x}$ and the point where the ray $\mathbf{x}+ t\mathbf{v}  \quad (t>0)$ intersects the boundary of the convex set, and $r^-=r^-(\mathbf{x}, \mathbf{v})$ the distance between $x$ and the point where the ray $\mathbf{x} -t\mathbf{v} \quad  (t<0)$  intersects the boundary of the convex set.  Then the Minkowski functional $p(\mathbf{x}, \mathbf{v})$ for the Hilbert metric $H(\mathbf{x}, \mathbf{y})$ is given by 
\[
p(\mathbf{x}, \mathbf{v}) = \frac12 |v| \Big( \frac{1}{r^+} + \frac{1}{r^-} \Big).
\]
Clearly this expression is symmetric in $\mathbf{v}$; $p(\mathbf{x}, \mathbf{v})= p(\mathbf{x}, -\mathbf{v})$. Furthermore,  the expression reflects the fact that the Hilbert metric is the arithmetic symmetrization of the timelike Funk metrics, as each term is the Minkowski functional of the respective  timelike Funk metric for the future convex set $\Delta_2$ and the past convex set $\tilde{\Delta}_2$.

Also note that the line segment with positive slopes in $\mathbb{R}^2 \setminus Q_3$ corresponds to great circles in the sphere by the inverse of the stereographic projection $\tilde{\pi}_2: \tilde{\mathbb{U}}_2 \rightarrow \tilde{\Pi}_2$. 

 Given a point $p$ in $\mathbb{R}^2 \setminus Q_3$, let $x$ be the point $\tilde{\pi}_2^{-1}(p)$ in $\tilde{\mathbb{U}}_2$.  The inverse images of all the line segments of positive slope through $p$ are then identified with the region defined by two great semi-circles $\ell_1(x)$ and $\ell_2(x)$, the former through $C_1:= (1, 0, 0)$ and $\tilde{C}_1 := (-1, 0, 0)$, and the latter through $C_3:= (0, 0, 1)$ and $\tilde{C}_3 := (0, 0, -1)$.  Note that the spherical region, which is a union of two spherical lunes, contains the two simplices $\Delta_2$ and $\tilde{\Delta}_2$, marking the future and the past of the point $x$ respectively. Also note that $C_2:= (0, 1, 0)$ cannot be reached by any arc of great circle from $p$ without traversing one of the simplices.  

The great circles $\ell_1(\mathbf{x})$ and $\ell_2(\mathbf{x})$ are sent by $\tilde{\pi}_2$ to two straight lines in $ \tilde{\Pi}_2 \cong \mathbb{R}^2$ that are the horizontal (parallel to the $x$-axis) and vertical (parallel to the $y$-axis) lines intersecting at $p$, hence they do not intersect the third quadrant (See Figure 7a), implying that
\[
r^+(\mathbf{x}, \mathbf{v}) = r^-(\mathbf{x}, \mathbf{v}) = \infty,
\]
making 
\[
p(\mathbf{x}, \mathbf{v}) = \frac12 |v| \Big( \cancel{\frac{1}{r^+}} + \cancel{\frac{1}{r^-}} \Big) = 0
\]
when $\mathbf{v}$ is tangent to the great circles $\ell_1(\mathbf{x})$ or $\ell_2(\mathbf{x})$.
In other words, these lines define the lightlike/null directions of the timelike spherical Hilbert metric. (See Figure 6a.) This is in analogy with the case of the classical Lorentz space\index{Lorentz space} where the light cone is the set of vectors of norm zero ($\lambda_n(\mathbf{x})=0$ in the notation of \S \ref{s:Intro}).

%\begin{figure}[!ht] 
%\centering
%\includegraphics[width=0.55\linewidth]{Figure_6a.eps, Figure_6b.eps}    \caption{\small {Light cone structures}   \label{light cone}  
%\end{figure}

\begin{figure}
\label{fig6}
\centering
 \includegraphics[width=1\linewidth]{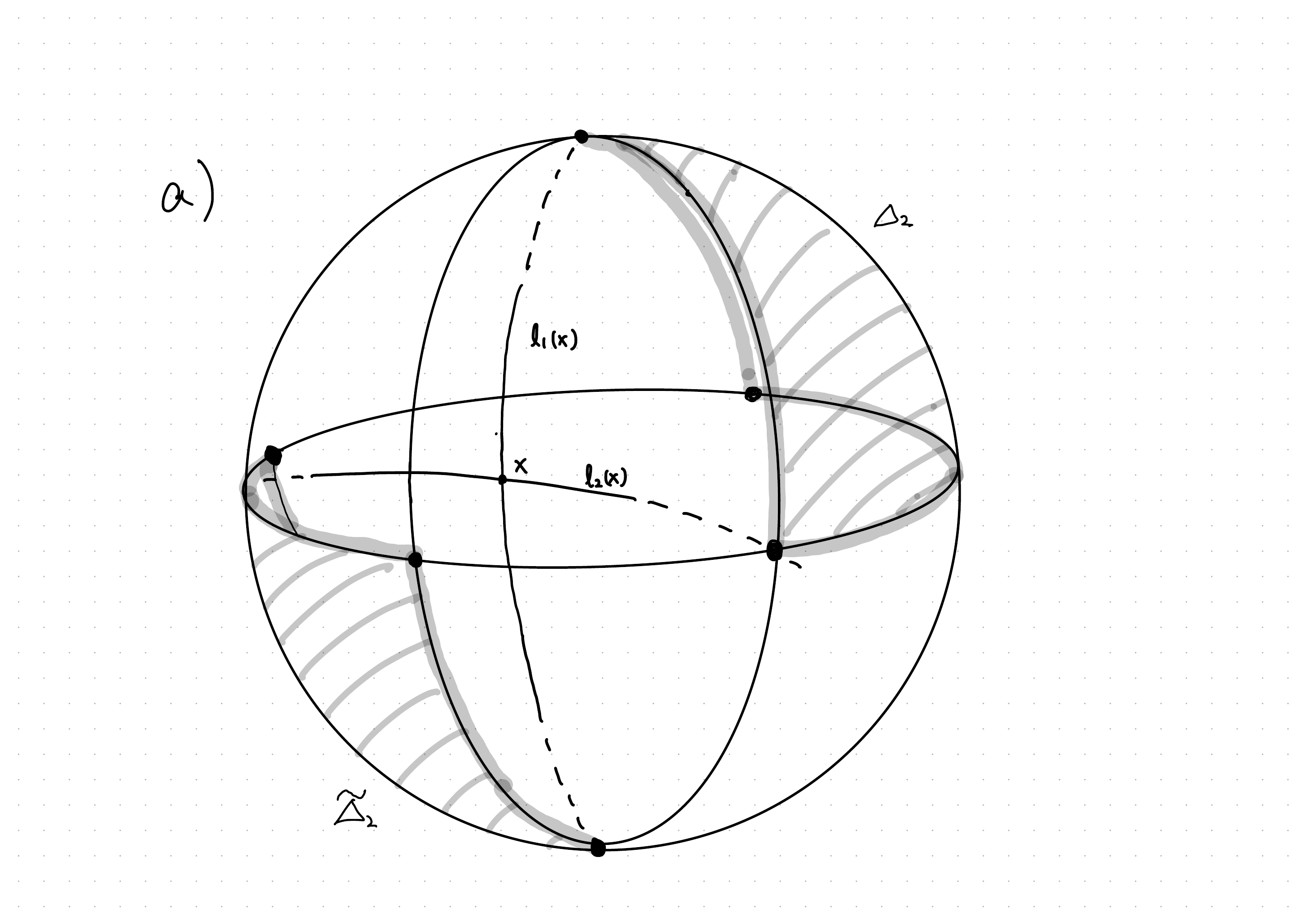}  
 \includegraphics[width=1\linewidth]{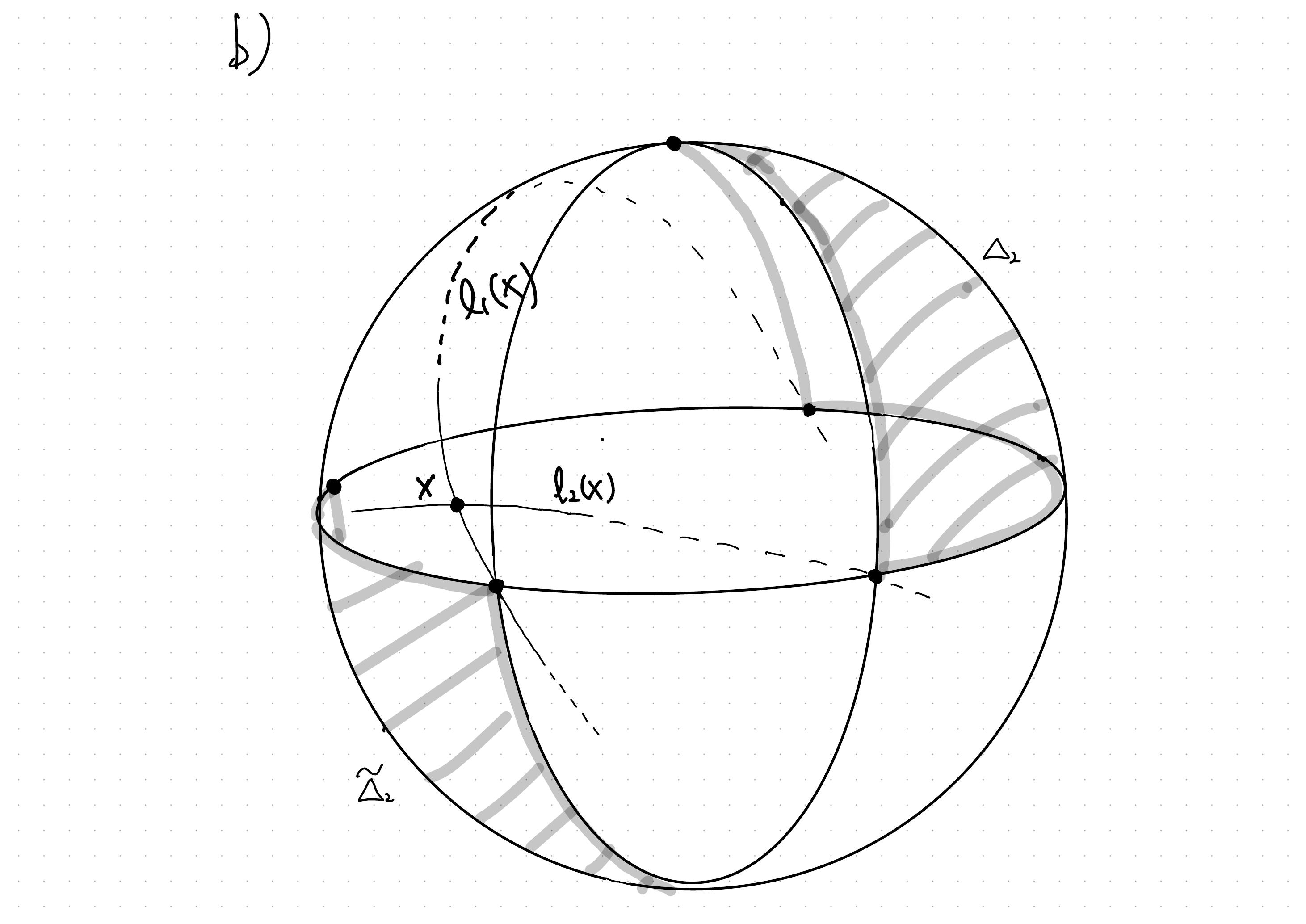}  
   \caption{\small {Light-cone structures}}  
\end{figure}

The explicit expression of the Minkowski functional for  the vector is nonzero only if the ray along the vector hits the third quadrant $Q_3$.
There are two cases for the value of the Minkowski functional on that vector, depending on whether this ray hits the negative part  of the $x$ axis or the negative part  of the $y$ axis (see Figure 7a).  

Case 1 is characterized by the inequality $\frac{v_2}{v_1}<\frac{y}{x}$. Letting $\theta$ be the angle made by the vector with the negative $y$-axis, and $r_+$ the distance between the point of coordinates  $(x, y)$ and the point where the ray directed by the vector $(v_1,v_2)$ hits the $x$-axis, we have, in this case, $r_+=\frac{y}{\cos \theta}$ and 
\[
p(\mathbf{x},v)=
\frac12 \vert v \vert \left(\frac{1}{\frac{y}{\cos \theta}} \right)
= \frac12 \frac{\vert v\vert \cos \theta}{y}=- \frac12 \frac{v_2}{y}>0
.\]

 Case 2 is characterized by the inequality $\frac{v_2}{v_1}>\frac{y}{x}$. Letting $\sigma$ be now the angle made by the vector with the negative $x$-axis,   and $r_+$ the distance between the point of coordinates  $(x, y)$ and the point where the ray directed by the vector $(v_1,v_2)$ hits the $x$-axis, we have $r_+=\frac{x}{\cos \sigma}$ as before, and 
\[
p(\mathbf{x},v)= \frac12
\vert v\vert \left(\frac{1}{\frac{x}{ \cos \sigma}} \right)
= \frac12 \frac{\vert v\vert\cos\sigma}{x}=- \frac12 \frac{v_1}{x}>0
.\]

The intermediate case when $\frac{v_2}{v_1} = \frac{y}{x}$ is when the vector $v$ is directed toward the origin with
\[
p(\mathbf{x},v)=- \frac12 \frac{v_1}{x} = - \frac12 \frac{v_2}{y} > 0.
\] 

It is useful to change variables
\[
 \tilde{\mathbf{x}} := (\tilde{x}, \tilde{y}) 
 := (\log x, \log y) := \mathrm{Log}\, \mathbf{x} 
 \]
to describe the Minkowski norm $p$.  The logarithm function $\mathrm{Log}$ defines a diffeomorphism from $Q_1$ to $\mathbb{R}^2$. 
Such a transformation sends the first quadrant of the $(x,y)$-plane onto the whole  $(\tilde{x}, \tilde{y})$-plane, the latter being seen as the tangent space at a point in the former. 

The change of variables for tangent vectors is described by the Jacobian matrix:

\[
d \mathrm{Log}: 
\Big(\begin{matrix} v_1 \\  v_2\\  \end{matrix}\Big)\mapsto
\Bigg(\begin{matrix} \frac{1}{x} & 0 \\  0 &  \frac{1}{y}\  \end{matrix} \Bigg)
\Big(\begin{matrix} v_1 \\  v_2\\  \end{matrix} \Big)=
\Bigg(\begin{matrix} \frac{v_1}{x} \\  \frac{v_2}{y} \\  \end{matrix}  \Bigg) =:
\Big(\begin{matrix} \tilde{v}_1 \\  \tilde{v}_2\\  \end{matrix}\Big)
\]

Thus, the Minkowski functional, written in the  $(\tilde{x}_1, \tilde{x}_2)$-coordinates, is given by the formulae
\[
p(\mathbf{x},\mathbf{v})=-\frac12 \frac{v_2}{x_2}= - \frac12 \tilde{v}_2 := \tilde{p} (\tilde{\mathbf{x}},\tilde{\mathbf{v}})
\]
 in the case  
$\tilde{v}_1>\tilde{v}_2$
and
\[
p(\mathbf{x},\mathbf{v})=-  \frac12\frac{v_1}{x_1}=-\frac12 \tilde{v}_1>0 =:\tilde{p} (\tilde{\mathbf{x}},\tilde{\mathbf{v}})
\] 
in the case 
$\tilde{v}_1<\tilde{v}_2$.

These formulae,
\begin{align}
\tilde{p}(\tilde{\mathbf{x}}, \tilde{\mathbf{v}}) =  \begin{cases}
      - \frac12 \tilde{v_2}>0 & \text{if } \tilde{v}_1>\tilde{v}_2,\\
     - \frac12 \tilde{v_1}>0 & \text{if } \tilde{v}_1<\tilde{v}_2.
  \end{cases}
\end{align}
are independent of the $(\tilde{x}, \tilde{y})$-coordinates. Hence, the Finsler space is a Minkowski space, that is, a finite-dimensional normed space, and in particular the light cone structure of the metric is homogeneous in $(\tilde{x}, \tilde{y})$ (see Figure 8a).

\end{proof}

%\begin{figure}[!ht] 
%\centering
%\includegraphics[width=0.55\linewidth]{Figure_7a.eps, Figure_7b.eps}    \caption{\small {dependence on local charts}   \label{local charts}  
%\end{figure}

\begin{figure}
\label{fig7}
\centering
 \includegraphics[width=1\linewidth]{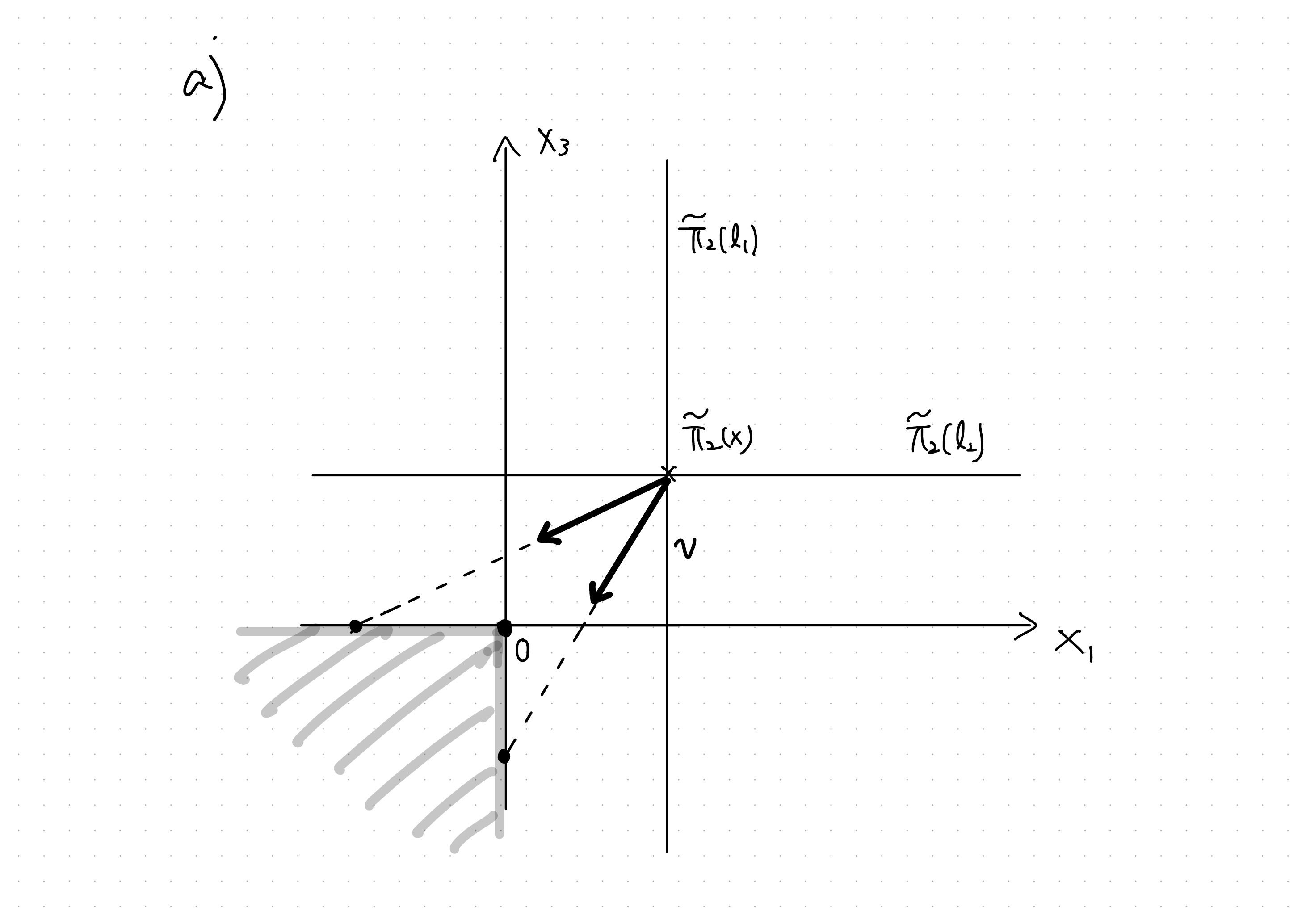}  
 \includegraphics[width=1\linewidth]{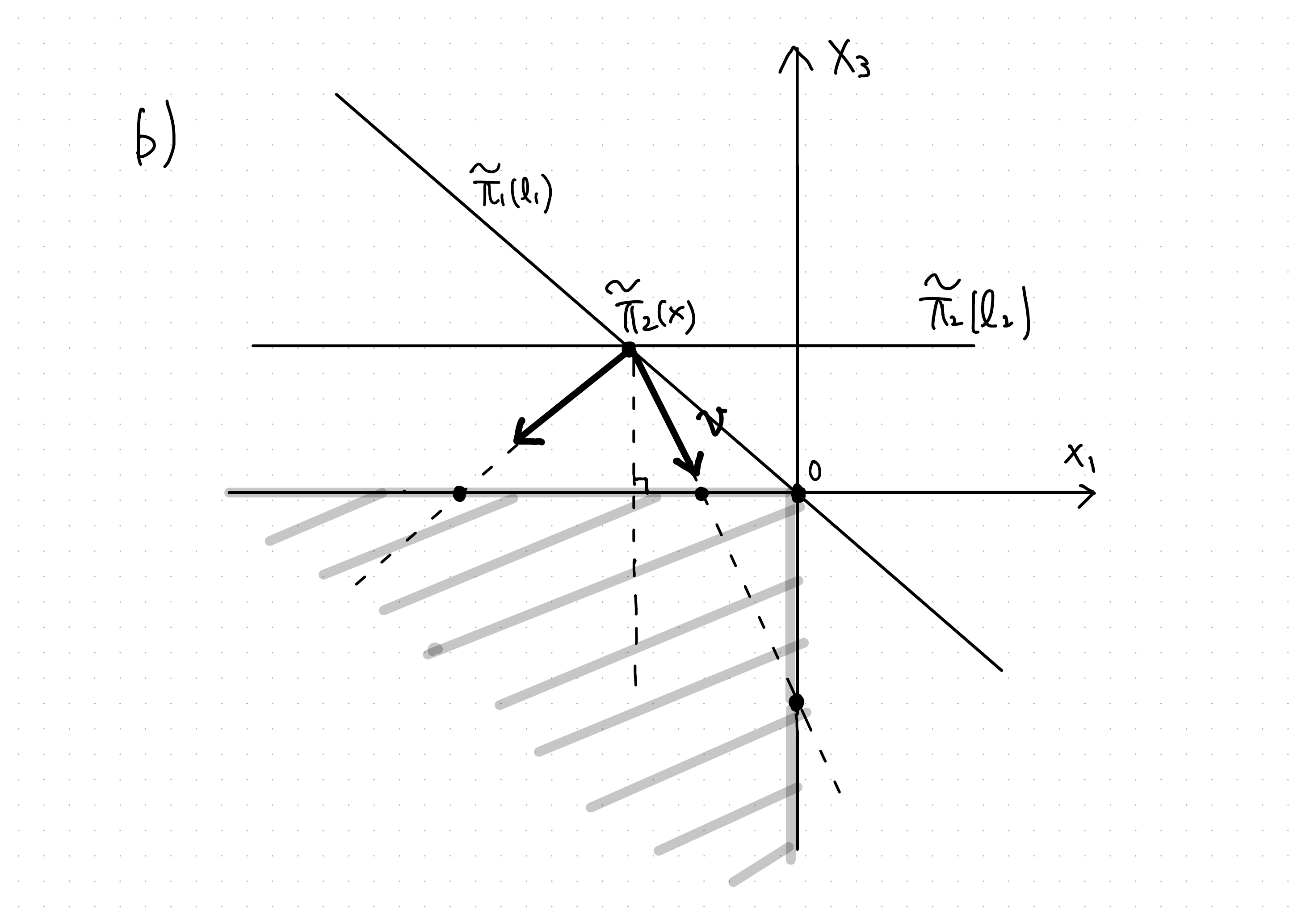}  
   \caption{\small {Local charts}}  
\end{figure}

Let us now study the Finsler structure at a point $\mathbf{x} = (x, y)$ in the second quadrant $Q_2 = \{ x< 0, y>0\}$. (See Figure 6b.) Recall that it is not necessary to prove the main theorem, as one can always change the local charts so that the above argument can be applied exactly in the new coordinate system.  Indeed, using the chart $(\mathbb{U}_3, \pi_3)$, the transformation 
$\pi_3 \circ (\tilde{\pi}_2)^{-1}$ will have $Q_2$ moved to the third quadrant $\overline{Q}_3$ of $\Pi_3$, the $x_2x_3$-plane, and the future set $\Delta_2$ is the first quadrant $\overline{Q}_1$ of $\Pi_3$.  Yet, we want to understand the effect of the choice of coordinates in terms of the representation of the Minkowski functional.
  
Let $\mathbf{v} = (v_1,v_2)$ denote the coordinates of a vector at that point $\mathbf{x}$. We are interested in the case when the ray from $\mathbf{x} + t\mathbf{v}$  hits the third quadrant $Q_3$. (See Figure 7b.)  The ray will then have to hit the negative part of the $x$-axis, with $v_2<0$ required.
There are, however, two cases to be distinguished from each other:

Case 1: $v_1\leq 0$;

 Case 2: $v_1 >0$. 
 
   Case 1 is simple, as once the ray hits $Q_3$, it remains in $Q_3$.  In this case, the Minkowski functional is simply
\[
p(\mathbf{x}, \mathbf{v}) = - \frac12  \frac{v_2}{y}. 
\]

In Case 2, the ray hits the quadrant $Q_3$ at $a_1$ and then leaves the quadrant  at a point $\hat{a}_2$ belonging to the negative part of the $y$-axis.  Indeed,  the Minkowski functional as the linearization of 
\[
H(\mathbf{x}, \mathbf{x} + t\mathbf{v}) = \frac12 \log [\hat{a}_2, \mathbf{x}, \mathbf{x} + t\mathbf{v}, a_1]
\]
at $t=0$ is calculated as
\[
p(\mathbf{x}, \mathbf{v}) = \frac{d}{dt} H(\mathbf{x}, \mathbf{x} + t\mathbf{v}) \Big|_{\{t=0\}} = \frac12 |v|  \Big( \frac{1}{r^+} - \frac{1}{\hat{r}^-} \Big),
\]
where $\hat{r}^-$ is the Euclidean distance from $\mathbf{x}$ to $\hat{a}_2$. 

We map $Q_2 = \{ x< 0, y>0 \}$ onto the whole plane $\mathbb{R}^2$ by the mapping
\[
\mathrm{Log}: (x, y) \mapsto (\log (-x), \log y)
\]
whose differential is 
\[
d \mathrm{Log}: 
\Big(\begin{matrix} v_1 \\  v_2\\  \end{matrix}\Big)\mapsto
\Bigg(\begin{matrix} \frac{1}{-x} & 0 \\  0 &  \frac{1}{y}\  \end{matrix} \Bigg)
\Big(\begin{matrix} v_1 \\  v_2\\  \end{matrix} \Big)=
\Bigg(\begin{matrix} - \frac{v_1}{x} \\  \frac{v_2}{y} \\  \end{matrix}  \Bigg) =:
\Big(\begin{matrix} -\tilde{v}_1 \\  \tilde{v}_2\\  \end{matrix}\Big)
\]
where $v_2 < 0$ which in turn implies $\tilde{v}_2 < 0$.   This change of variables induces the following expression for the Minkowski functional:

In  Case 1,  we have $v_1\leq 0$, and $r^-=\infty$ and hence
\[
p(\mathbf{x}, \mathbf{v}) = \frac12  |v| \Big( \frac{1}{r^+} +\cancel{\frac{1}{r^-}} \Big) = -\frac12 \frac{v_2}{y} = - \frac12 \tilde{v_2} > 0.
\]

In Case 2, we have $v_1 >0$, $\hat{r}^-= -\frac{x}{\sin \theta}$ where $\theta$ is the angle between $v$ and the $y$-axis. Hence 
\[
p(\mathbf{x}, \mathbf{v}) = \frac12  |v| \Big( \frac{1}{r^+} - \frac{1}{\hat{r}^-} \Big) = - \frac12 \frac{v_2}{y}   - \frac12 \frac{|v| \sin \theta}{x} = 
- \frac12 \frac{v_2}{y}  - \frac12 \frac{v_1}{x} = - \frac12 \tilde{v}_2 - \frac12 \tilde{v}_1.
\]

Note that the norm in Case 2 is positive, as  $\hat{r}^- > r^+$ is implied by the geometry. 

Thus, we have
\begin{align}
\tilde{p}(\tilde{\mathbf{x}}, \tilde{\mathbf{v}}) =  \begin{cases}
      - \frac12 \tilde{v_2} & \text{if } \tilde{v}_1 \leq 0,\\
     - \frac12 \tilde{v}_1- \frac12 \tilde{v_2}   & \text{if } \tilde{v}_1 > 0
  \end{cases}
\end{align}

Note that the Minkowski norm $\tilde{p}$ thus defined on $\mathbb{R}^2$ is independent of $(\tilde{x}, \tilde{y})$, making the timelike metric space homogeneous. 
The set of unit length vectors for this norm, that is, its indicatrix, as well as the light cones in this normed space, are represented in Figure 8b.  One can check that   for the corresponding indicatrix of the Minkowski functionals, Figure 8a and  Figure 8b  are identified via the transformation 
$\pi_3 \circ (\tilde{\pi}_2)^{-1}$. 

%\begin{figure}[!ht] 
%\centering
%\includegraphics[width=0.55\linewidth]{Figure_8a.eps, Figure_8b.eps}    \caption{\small {Minkowski functionals for normed space models}   \label{Minkowski functional}  
%\end{figure}

\begin{figure}
\label{fig8}
\centering
 \includegraphics[width=1\linewidth]{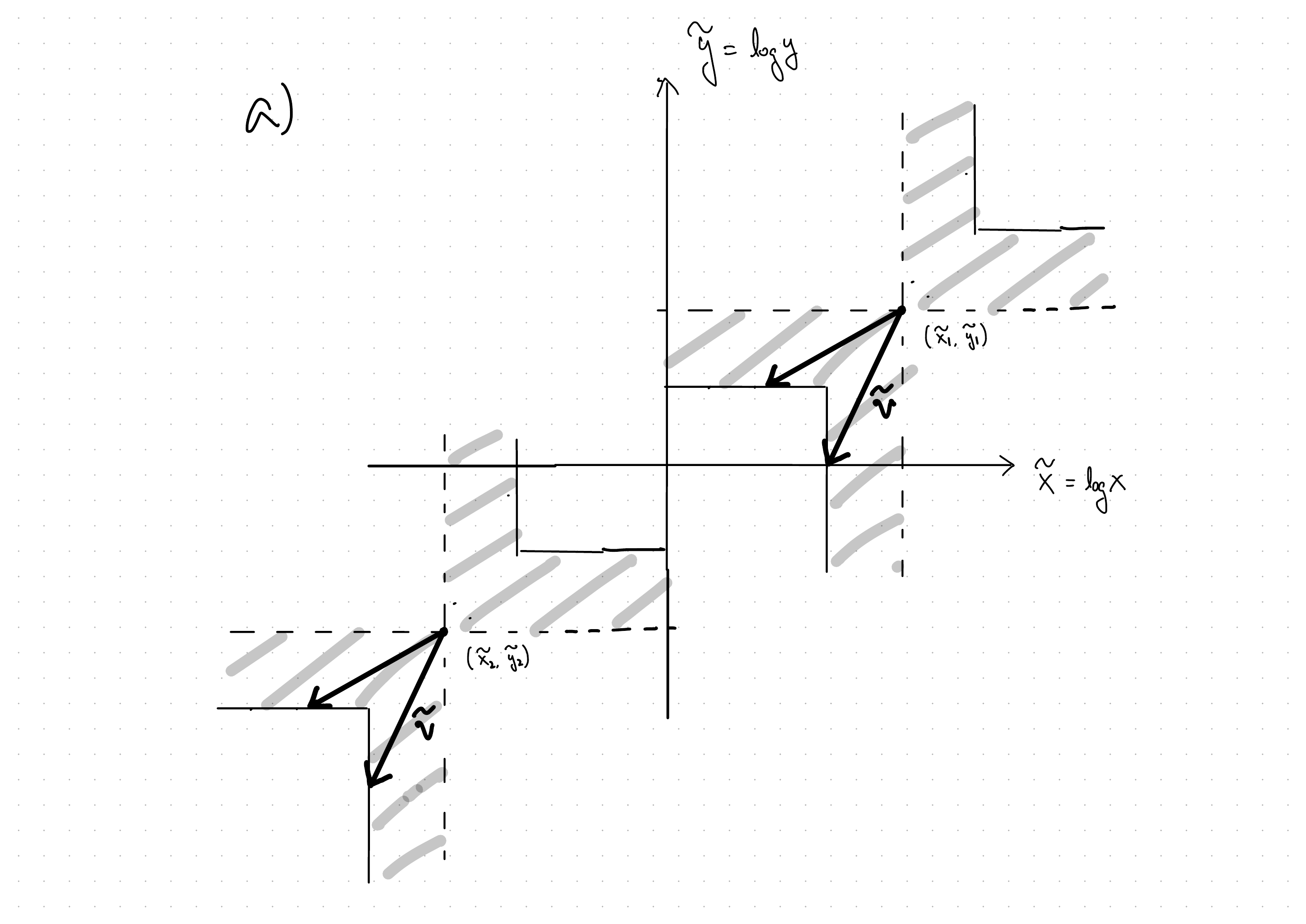}  
 \includegraphics[width=1\linewidth]{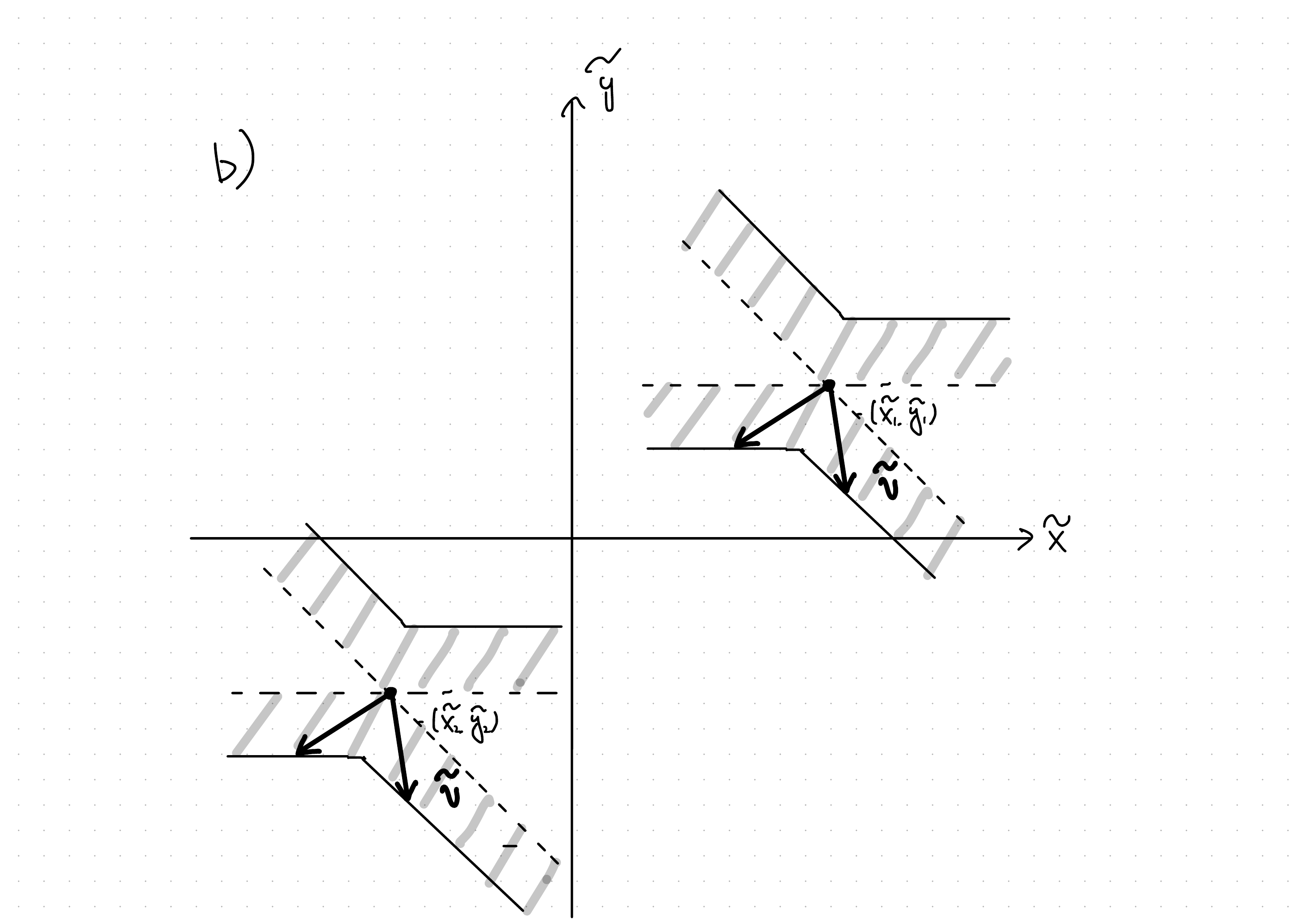}  
   \caption{\small {Minkowski functional}}  
\end{figure}

Hence, the timelike spherical Hilbert geometry of $\Omega = S^2 \setminus (\Delta_2 \cup \tilde{\Delta}_2)$ is homogeneous. The local homogeneity is  represented by $(\mathbb{R}_{> 0})^2$ as the structure of a timelike normed space, and the global homogeneity is given by the action of the abelian Lie group $(\mathbb{R}_{> 0})^2  \times \mathbb{Z}_{3} \times  \mathbb{Z}_{2} $.    
This local homogeneity can be  expressed  differently  depending on where the Minkowski norm is calculated.  

When  the local chart is chosen as above so that the basepoint $\mathbf{x}$ is in the first quadrant of $Q_1$, and the simplex $\tilde{\Delta}_2$ is $Q_3$, then the multi-sign $\mathrm{MS}(\mathbf{x}) = (+, - +)$ while $\mathrm{MS}(\mathbf{y}) = (-,-,-)$ for any point $\mathbf{y}$ in $\tilde{\Delta}_2$.  Going from $x$ to $y$,  the multi-sign $\mathrm{MS}$ has to change from $(+, -, +)$ to $(-,-,-)$. Namely, from the viewpoint of $\mathbf{x}$, there are two coordinate hyperplanes in $\mathbb{R}^3$ to cross:
\[
(+, -, +) \rightarrow (-, -, +) \rightarrow (-, -, -) \quad \mbox{ or } \quad   (+, -, +) \rightarrow (+, -, -) \rightarrow (-, -, -)
\]
 as one traverses the unit sphere, in order to reach the simplex $\tilde{\Delta}_2$ from $\mathbf{x}$. Each hyperplane corresponds to a side of the light cone at $\mathbf{x}$ in the normed space model, that is, rays parallel to the $\tilde{x}$ and $\tilde{y}$-axis, as the light cone is asymptotic to the direction tangential to the boundary of the convex set $\tilde{\Delta}_2$. 

On the other hand, the local chart is chosen as above so that the basepoint $\mathbf{x}$ is in the first quadrant of $Q_2$, and the simplex is $Q_3$, then as one traverses from $\mathbf{x}$ to a point $\mathbf{y}$ in $Q_3$, the multi-sign has to change from $(-, -, +)$ to $(-, -, -)$. 
Note here that  there is only one coordinate hyperplane, namely $(x_1,x_2)$-plane, to cross in order to reach $Q_3$ from $\mathbf{x}$. 
The past-directed light cone at $\mathbf{x}$ in the normed space model consists of two rays, one asymptotic to the $x$-axis, and the other passing through the origin of the $\tilde{x}\tilde{y}$-plane.  

 Of course, if one looks at the ``future" of $\mathbf{x}$, then the multi-sign changes from $(-,-, +)$ to $(+, +, +)$, and the situation is back to the first case of the multi-sign changes from $(+, -, +)$ to $(-,-,-)$, modulo the $\mathbb{Z}_2$ symmetry ``future" $\leftrightarrow$ ``past".   

It follows in particular that the shape of the light cone in the normed space representing the Hilbert geometry is dependent on the multi-sign of the basepoint $\mathbf{x}$, though they are all equivalent. (Compere Figure 8a and  Figure 8b.)

We finally note that Phadke, in the paper \cite{Phadke}, gave formulae for the classical (non-timelike) Minkowski functional associated to the interior of a simplex.
%, which was further investigated in \cite{delaHarpe}. 
Our formulae are timelike analogues of Phadke's formulae.

\end{document}